\newtheorem{theorem}{Theorem}[section]
\newtheorem{lemma}[theorem]{Lemma}
\newtheorem{corollary}[theorem]{Corollary}
\newtheorem{proposition}[theorem]{Proposition}
\theoremstyle{definition}
\newtheorem{definition}[theorem]{Definition}
\newtheorem{example}[theorem]{Example}
\newtheorem{assumptions}[theorem]{Assumptions}
\theoremstyle{remark}
\newtheorem{remark}[theorem]{Remark}
\newcommand{\E}[2][]{\mathbb{E}_{#1}\left[#2\right ]}
\renewcommand{\P}[2][]{\mathbb{P}_{#1}\left(#2\right )}
\newcommand{\R}{\mathbb{R}}
\newcommand{\N}{\mathbb{N}}
\newcommand{\ind}[1]{\mathds{1}_{#1}}
\renewcommand{\L}{\mathcal{L}}
\newcommand{\cadlag}{c\`adl\`ag~}
\newcommand{\Emeasure}{\xi}
\newcommand{\ExtendedE}{\bar{E}}
\newcommand{\law}{\operatorname{law}}
\newcommand{\disceval}[1]{\lfloor \frac{#1}{\Delta} \rfloor \Delta}
\newcommand{\discevaln}[1]{\lfloor \frac{#1}{\Delta_n} \rfloor \Delta_n}
\newcommand{\discevalh}[1]{\lfloor \frac{#1}{h} \rfloor h}
\newcommand{\indfunc}{\lambda}
\newcommand{\GammaDelta}{\Gamma_{\!\Delta}}
\newcommand{\GammaDeltan}{\Gamma_{\!\Delta_n}}
\newcommand{\Lmin}{\underline{\Lambda}}
\newcommand{\Xmin}{\underline{X}}
\newcommand{\taumin}{\underline{\tau}}
\renewcommand{\L}{\Lambda}
\newcommand{\GammaAl}{{\Gamma}^{\alpha}}
\newcommand{\GammaAlpert}{\tilde{\Gamma}^{\alpha}}
\newcommand{\Ldonskermin}{\underline{\L}}
\newcommand{\Ltime}{\underline{\L}}
\newcommand\mydots{\hbox to 1em{.\hss.\hss.}}
\title{Implicit and fully discrete approximation of the supercooled Stefan problem in the presence of blow-ups}
\author{Christa Cuchiero\thanks{Vienna University, Department of Statistics and Operations Research, Data Science @ Uni Vienna, Kolingasse 14-16, A-1090 Wien, Austria, christa.cuchiero@univie.ac.at}
\and Christoph Reisinger \thanks{Mathematical Institute, University of Oxford, Andrew Wiles Building, Radcliffe Observatory Quarter, OX2 6GG, Oxford, U.K., christoph.reisinger@maths.ox.ac.uk}
\and Stefan Rigger \thanks{Vienna University, Department of Statistics and Operations Research, Kolingasse 14-16, A-1090 Wien, Austria, stefan.rigger@univie.ac.at.
\newline
The authors gratefully acknowledge financial support by the Vienna Science and Technology Fund (WWTF) under grant MA16-021 and  by the Austrian Science Fund (FWF) through grant Y 1235 of the START-program.
}}
\date{}
\begin{document}
\maketitle
\abstract{We consider two %global implicit 
approximation schemes of the one-dimensional supercooled Stefan problem and prove their convergence, even in the presence of finite time blow-ups. 
All proofs are based on a probabilistic reformulation %of the supercooled Stefan problem
recently considered in the literature.
The first scheme is a version of the time-stepping scheme studied in %\cite{kaushansky2020convergence}, 
\emph{V. Kaushansky, C. Reisinger, M. Shkolnikov, and Z. Q. Song, arXiv:2010.05281, 2020},
but here the flux over the free boundary and its velocity are coupled implicitly.
Moreover, we extend the analysis to more general driving processes than Brownian motion. The second scheme is a
Donsker-type approximation,  also interpretable as an implicit finite difference scheme, for which global convergence is shown under minor technical conditions. %allowing in particular for regimes with blow-up.
With stronger assumptions,
which apply in cases without blow-ups, we obtain additionally a convergence rate arbitrarily close to 1/2. Our numerical results suggest that this rate also holds for less regular solutions, in contrast to explicit schemes, and allow
%. The results illustrate a main advantage of implicit treatments, namely
a sharper resolution of the discontinuous free boundary in the blow-up regime. 
%e.g.~in \cite{delarue2019global}.
\section{Introduction}

%In this paper, we introduce and analyse approximation schemes for the supercooled Stefan problem.
The classical PDE formulation of the one-dimensional one-phase Stefan problem is
%\begin{equation}
%\label{u_PDE}
%	\begin{split}
%		& \partial_t u = {\frac{1}{2}} \partial_{xx}u,\;\; x\ge \alpha\Lambda_t,\;\; t\ge 0, \\
%		& \dot{\Lambda}_t = \frac{1}{2}\partial_x u(t,\alpha\Lambda_t),\;\; t\ge0\quad\text{and}\quad\Lambda_0=0, \\
%		& u(0,x)=f(x),\;\; x\ge0 \quad\text{and}\quad u(t,\alpha\Lambda_t)=0,\;\; t\ge0.
%	\end{split}
%\end{equation}
\begin{equation}
\label{u_PDE}
	\begin{split}
		& \partial_t u = {\frac{1}{2}} \partial_{xx}u,\;\;  x > \Lambda_t,\;\; t\ge 0, \\
		&u(t,\Lambda_t)=0,\;\; t\ge0,\\		
		& \dot{\Lambda}_t = \frac{\alpha}{2}\partial_x u(t,\Lambda_t),\;\;t\ge0,\\
		& u(0,x)=f(x),\;\; x\ge0 \quad\text{and} \quad \Lambda_0=0.
	\end{split}
\end{equation}
%and describes the freezing of a supercooled liquid.
In physics terminology,
$-f$ is the initial temperature in a liquid, where we will consider $ f \geq 0$ corresponding to the supercooled regime; 
$-u(t,\cdot)$ is the temperature in the liquid phase at time $t$;
and  $\Lambda_t$ is the location of the liquid-solid boundary at time $t$.
The above relationship between the flux $\partial_x u$ at the free boundary $ \Lambda_t$
and the growth rate $\dot{\Lambda}_t$ of the solid phase is known as \emph{Stefan condition}. It
expresses the energy conservation at the interface,
in integrated form, %this is expressed by
\begin{equation*}
\Lambda_t =  \alpha\left(1- \int_{\Lambda_t}^{\infty} u(t,x) \, dx \right).
\end{equation*}

Variants of the Stefan problem, originally proposed in \cite{Stefan1}, have been studied in great detail in the applied mathematics literature. In particular, it has been established in a string of works starting in the 1970s (see, e.g., \cite{sherman1970general, herrero1996singularity, FP1,FP2,DF,HOL,LO,FPHO, Vis,DHOX,HX,Wei})
that the supercooled Stefan problem may exhibit finite time \emph{blow-ups}, whereby continuous solutions $t \mapsto \Lambda_t$ cease to exist. 

As closed-form solutions are rarely available for this type of free boundary problems, one typically has to resort to numerical methods.
We refer to \cite{caldwell2004numerical, mitchell2009finite} for a survey of numerical schemes. These works treat classical cases with different boundary conditions,
where  sign requirements (related to well-posedness without  blow-ups) on the initial or boundary data -- usually that the temperature of the material in the liquid phase is positive -- are imposed.
In the supercooled regime, where such conditions are not satisfied, regularization techniques
to prevent the formation of blow-ups in finite time  can be applied. For instance, in \cite{king2005regularization} the effect
of kinetic undercooling as a regularizing mechanism is analysed and it is shown how the (unregularized) supercooled Stefan problem 
can be recovered asymptotically. These asymptotics are also compared to numerical solutions of the unregularized problem.
The authors there follow (for both the regularized and the unregularized case) the so-called method of lines considered in \cite{fasano1986problem}.
More precisely, the continuous time PDE systems are discretized and solved at successive times as a sequence of free boundary
problems for  ordinary differential equations. They can in turn  be tackled via the so-called method of invariant embedding described in \cite{meyer1977one}, which gives rise to explicitely solvable Riccati equations.
Note that in the numerical studies of the unregularized case the blow-up
behaviour cannot be  fully reproduced due to truncation errors.
Further alternative (less physical) regularization approaches are discussed in \cite{fasano1990some}, albeit without a thorough numerical analysis.
In \cite{back2010numerical}, 
the ill-posed Stefan problem for melting a superheated solid, which is mathematically identical to the supercooled Stefan problem, is analysed. There, similarly as in \cite{king2005regularization}, the method of lines is applied and  existing numerical results from the literature in the blow-up regime are reproduced. 
In \cite{aiki1996blow}, blow-up points to one-phase Stefan problems, however with Dirichlet boundary conditions, are treated and studied numerically. 
Let us finally mention the recent paper  \cite{mccue2022traveling}, which analyses the Fisher--Stefan model, a generalization
of the well-known Fisher--KPP model, in the context of biological invasion, where
 the speed of
the moving boundary is related to the flux of population there. By rescaling this problem, it can be  compared to the supercooled Stefan problem. In this context, \cite{
mccue2022traveling} provides new links between these models and a numerical scheme for the Fisher--Stefan model.

%{\color{red} To do: more numerical PDE literature; also check if the `depletion case' mentioned in some of these papers
%relates to the blow-up case.}

Despite this plethora of articles on ill-posed Stefan problems, to our knowledge,
with the exception of the particle simulation scheme of \cite{kaushansky2020convergence}, no provably convergent algorithm is known for the
blow-up scenario. We contribute to this literature 
%Despite this plethora of articles on ill-posed Stefan problems, 
%to our knowledge, no provably convergent algorithm is known for the blow-up scenario.
%We shall close this gap 
by proposing a class of numerical schemes for which we can prove global convergence, % to the solution.
more specifically, that the discrete approximation of the free boundary converges to the true free boundary at all continuity points.

The following simple finite difference scheme is a canonical example of the schemes we will consider for \eqref{u_PDE}, where $u_i^k$ is an approximation to
%$u(k \Delta t, i \Delta x)$ for $i,k>0$ and some fixed mesh widths $\Delta x,\Delta t>0$:
$u(k h, i \sqrt{h})$ for $i,k>0$ and some fixed mesh widths $h>0$ in time and  $\sqrt{h}$ in space, respectively:
\begin{equation}
\label{impl_rec_intro}
%\begin{split}
%&\frac{u_i^k - u_i^{k-1}}{\Delta t}  =
%\frac{1}{2} \frac{u_{i+1}^{k-1} - 2 u_i^{k-1} +  u_{i+1}^{k-1}}{\Delta x^2}, \;\; k>0,\, i > i_k, \\
%& i_k = \bigg\lfloor
%%\frac{
%\frac{\alpha}{\Delta x} \bigg(1 - \sum_{i=i_k+1}^\infty u_i^k\bigg)
%\bigg\rfloor, \;\; k >0, \\
%& u_i^0 = \int_{i \Delta x}^{(i+1) \Delta x} f(x) \, dx, \;\; i \ge 0,
%\quad\text{and}\quad
%u_i^k = 0, \;\; k >0, i \le i_k.
%\end{split}
\begin{split}
&\frac{u_i^k - u_i^{k-1}}{h}  =
\frac{1}{2} \frac{u_{i+1}^{k-1} - 2 u_i^{k-1} +  u_{i+1}^{k-1}}{(\sqrt{h})^2}, \;\; k>0,\, i > i_k, \\
& i_k = \bigg\lfloor
%\frac{
\frac{\alpha}{\Delta x} \bigg(1 - \sum_{i=i_k+1}^\infty u_i^k\bigg)
\bigg\rfloor, \;\; k >0, \\
& u_i^0 = \int_{i \sqrt{h}}^{(i+1) \sqrt{h}} f(x) \, dx, \;\; i \ge 0,
\quad\text{and}\quad
u_i^k = 0, \;\; k >0, i \le i_k.
\end{split}
\end{equation}
We will analyse and implement a scheme of type \eqref{impl_rec_intro} in Section \ref{sec:numerics},
with a slightly perturbed initial condition, as motivated later.
%A stability condition $\Delta t \le \Delta x^2$ will also be required (indeed, dictated by a probabilistic interpretation). 
Note that \eqref{impl_rec_intro}
is reminiscent of the forward Euler approximation of the heat equation, but is nonetheless an implicit nonlinear equation
through the dependence of the discrete free boundary 
%$i_k \Delta x$ on the solution $(u_i^k)_i$ at time $k \Delta t$.
$i_k \sqrt{h}$ on the solution $(u_i^k)_i$ at time $k h$.

Our analysis is based on the following probabilistic reformulation of the problem:
\begin{equation}\label{eq:stefanproblem}
\left\{
\begin{aligned}
X_t &= X_{0-} + B_t - \L_t, \\ 
\tau &= \inf\{t \geq 0: X_t \leq 0 \}, \\
\Lambda_t &=\alpha\P{\tau \leq t},
\end{aligned}\right.
\end{equation}
where $X_{0-}$ is a real-valued non-negative random variable,
$\alpha > 0$, and $B$ is a standard Brownian motion independent of $X_{0-}$.
We will also study extensions where %the Brownian motion 
$B$ is replaced by a more general continuous-time process.

%Finally, %\eqref{eq:stefanproblem} is a probabilistic formulation of the Stefan problem which models the interface of a liquid and solid phase.
The link between \eqref{eq:stefanproblem} and \eqref{u_PDE} is found by noticing that over sufficiently small times the forward density 
$p(t,\cdot)$, $t\in[0,T]$ of the absorbed process $X_t\,\mathbf{1}_{\{\tau>t\}}$, $t\in[0,T]$ on $(0,\infty)$
 satisfies the initial-boundary value problem
\begin{equation}\label{p_PDE}
	\begin{split}
		& \partial_t p=\frac{1}{2}\partial_{xx}p+ \dot{\Lambda}_t\partial_x p,\;\;x\ge0,\;\;t\in[0,T], \\ 
	 	& p(0,x)=f(x),\;\; x\ge 0\quad\text{and}\quad p(t,0)=0,\;\;t\in[0,T], \\
		&\Lambda_t=\alpha\left( 1- \int_0^{\infty} p(t,x) dx\right), 
\;\;t\in[0,T],
	\end{split}
\end{equation}
		%& \dot{\Lambda}_t=\frac{\alpha}{2}\partial_x p(t,0),
as long as $X_{0-}$ has a sufficiently regular density $f$ supported on $[0,\infty)$.
Applying the  transformation $u(t,x):=  p(t,x-\Lambda_t)$, $x\ge \Lambda_t$ then leads to
\eqref{u_PDE} (see \cite{delarue2019global} for a rigorous proof).

The problem \eqref{eq:stefanproblem} and its variants have recently been used to model systemic risk of interconnected financial systems. There, $\Lambda_t$ models the proportion of banks defaulted at time $t$, and $\alpha$ the strength of interbank borrowing
(see e.g.~\cite{hambly2019mckean, nadtochiy2019particle, hambly2019spde, cuchiero2021optimal} for details and further references).
Equations with similar mean-field effects through thresholding derive from integrate-and-fire models of neurons as considered in \cite{delarue2015particle}.

%$\Lambda_t$ is the position of the liquid-solid interface and $-p\ge 0$ the temperature below freezing point.
%The medium is then referred to as \emph{supercooled}.
Besides the applicability of \eqref{eq:stefanproblem} to systemic risk modeling and neuroscience, % \cite{delarue2015particle},
which is justified by certain propagation of chaos results (see \cite{delarue2015particle, cuchiero2020propagation}),
 the probabilistic formulation \eqref{eq:stefanproblem} has also several mathematical advantages. Most notably, it 
 allows for a rigorous definition of global solutions to \eqref{u_PDE}  even in the presence of blow-ups. For this it is however necessary to first establish appropriate solution concepts for the probabilistic version of \eqref{eq:stefanproblem}.
%This in turn brings new challenges in terms of 
Indeed, due to the singular interaction, %the McKean--Vlasov equation 
\eqref{eq:stefanproblem}
does not have unique solutions in general. Therefore, two physically and economically meaningful
notions of solutions have been developed: so-called \emph{physical solutions} and \emph{minimal solutions}.
A rigorous characterisation of unique physical solutions was given under certain technical assumptions on the initial conditions in \cite{delarue2019global}.
The  concept of  \emph{minimal solutions}, which are by definition unique, was in detail treated in \cite{cuchiero2020propagation}. In particular, it was shown there that the minimal solution is also a physical one. It is still an open
question if uniqueness of physical solutions holds in general, but 
it is clear that
the minimal solution coincides with the minimal physical one.

Among the numerical schemes introduced recently for the approximation of \eqref{eq:stefanproblem} are those using time-stepping (see \cite{kaushansky2020convergence}), %integral transforms (see ), 
iterations with heat kernels (see \cite{lipton2019semi}) and particle systems (see \cite{kaushansky2018simulation}), where in the latter the numerical schemes are based on the corresponding propagation of chaos result.

Motivated by promising numerical experiments based on implicit instead of explicit schemes, 
 we here first prove a global convergence result for an  \emph{implicit version of the Euler time-stepping scheme}, as considered in \cite{kaushansky2020convergence}.
Furthermore, we extend this result to
 more general drivers than Brownian motion and allow for processes that satisfy the so-called crossing property  (see \eqref{eq:crossingproperty}), which holds e.g.~for fractional Brownian motion with arbitrary Hurst parameter in $(0,1)$ and some non-degenerate continuous semi-martingales (see Example \ref{ex:cross}).

To further reduce the numerical complexity we then consider a scheme that -- in contrast to the time-stepping algorithm -- does not require a (time-consumimg) Monte Carlo simulation of the corresponding particle system. The main result in the second part of the paper is that convergence in the Brownian case still holds after a Donsker approximation of the driving Brownian motion, i.e.\ the increments over single time-steps are approximated by random variables matching the first two moments of the Brownian increments. Under more restrictive conditions, in particular ruling out blow-ups, a rate of 1/2 (modulo a log factor) is proven.
%The scheme 

The computational significance of this result is that by use of suitable discrete random variables, the discrete measure is supported on a recombining binomial tree and can be computed deterministically by a recursion over time-steps without simulation.
The resulting discrete equations, of which \eqref{impl_rec_intro} is a special case, are then interpretable as approximation schemes (finite difference schemes) for \eqref{u_PDE}.
We refer to these as \emph{fully discrete} schemes, in contrast to the schemes based on timestepping approximations alone,
which require additional (particle) approximations.

%We observe the following advantage in computational complexity.
%If the step size in time is $\Delta t$, the spacing of the tree nodes is $\Delta t^{1/2}$. On a given time horizon and finite spatial domain
%(by truncation of the real line), the total number of nodes, and hence computational complexity, is therefore $O(\Delta t^{-3/2})$.
%Although we only prove an order of convergence in the case of no blow-ups, we find empirically that in all regimes studied (jumps, no jumps, different regularities)
%the error is of order $\Delta t^{1/2}$; see Section \ref{sec:numerics} for details.
%Hence, to achieve an error of $\epsilon$, the computational complexity is $O(\epsilon^{-3})$.
%In contrast to that, for the Monte Carlo time-stepping scheme of \cite{kaushansky2018simulation, kaushansky2020convergence}, the simulation error using $n$ particles is 
%$O(n^{-1/2})$, at cost $O(n)$, and the time stepping error in the most regular case is $\Delta t^{1/2}$. Setting $n$ proportional to $\lfloor \Delta t^{-1} \rfloor $, the computational complexity is then $O(\epsilon^{-4})$ to achieve an error of $\epsilon$.
%Moreover, for low regularity, the convergence of the (explicit) time-stepping scheme is found to be arbitrarily slow in \cite{kaushansky2020convergence}, and hence the computational complexity for prescribed
%accuracy arbitrarily high. For the  Donsker scheme, we do not observe such a phenomenon, but rather similar convergence rates in regular and irregular regimes. At the same time, the computational time is also considerably lower.
We observe the following advantage in computational complexity.
If the step size in time is $h$, the spacing of the tree nodes is $h^{1/2}$. On a given time horizon and finite spatial domain
(by truncation of the real line), the total number of nodes, and hence computational complexity, is therefore $O(h^{-3/2})$.
Although we only prove an order of convergence in the case of no blow-ups, we find empirically that in all regimes studied (jumps, no jumps, different regularities)
the error is of order $h^{1/2}$; see Section \ref{sec:numerics} for details.
Hence, to achieve an error of $\epsilon$, the computational complexity is $O(\epsilon^{-3})$.
In contrast to that, for the Monte Carlo time-stepping scheme of \cite{kaushansky2018simulation, kaushansky2020convergence}, the simulation error using $n$ particles is 
$O(n^{-1/2})$, at cost $O(n)$, and the time stepping error in the most regular case is $h^{1/2}$. Setting $n$ proportional to $\lfloor h^{-1} \rfloor $, the computational complexity is then $O(\epsilon^{-4})$ to achieve an error of $\epsilon$.
Moreover, for low regularity, the convergence of the (explicit) time-stepping scheme is found to be arbitrarily slow in \cite{kaushansky2020convergence}, and hence the computational complexity for prescribed
accuracy arbitrarily high. For the  Donsker scheme, we do not observe such a phenomenon, but rather similar convergence rates in regular and irregular regimes. At the same time, the computational time is also considerably lower.

The remainder of the article is organized as follows. In Section \ref{sec::notation}, we introduce relevant notation applied throughout the paper. Section \ref{sec:time-stepping} is dedicated to the convergence result  of the implicit time-stepping scheme extended to more general driving processes than Brownian motion. In Section \ref{sec:donsker}, we then introduce the Donsker-type approximation scheme which has lower complexity and for which we can prove a rate of convergence in the regular case without blow-up. Section \ref{sec:numerics}
concludes with numerical tests showing a better resolution of jumps by implicit schemes compared to explicit schemes in the blow-up regime, particularly on coarse meshes, and a uniform convergence rate of 1/2 of the Donsker scheme across all scenarios, even where no such rate is proven.

\subsection{Notation}\label{sec::notation}
Throughout the paper, $D([-1,\infty))$ denotes the space of \cadlag functions on $[-1,\infty)$ endowed with the Skorokhod $M_1$-topology. For properties of the $M_1$-topology we refer to \cite{whitt2002stochastic} and \cite[Appendix A]{cuchiero2020propagation}. For $x \in D([-1,\infty))$ we denote by $\operatorname{Disc}(x)$ the set of discontinuity points of $x$. We also define the path functional $\indfunc_t(x) := \ind{(0,\infty)}(\inf_{0 \leq s \leq t} x_s)$, such that $\indfunc_t(x) = 0$ if $x_s > 0$ on $[0,t]$ and $\indfunc_t(x) = 1$ otherwise. The space of
continuous functions on $[0,\infty)$ is denoted by $C([0,\infty))$ and endowed with the topology of compact convergence, i.e., $w_n \to w$ in $C([0,\infty))$ if and only if $w_{n}\vert_{K} \to w\vert_{K}$ uniformly
for every compact $K \subseteq [0,\infty)$. We define the space of cumulative distribution functions on $[0,\infty]$ by
\begin{equation}\label{eq:DistFunctiondef}
M := \{ \ell \colon \overline{\R} \rightarrow [0,1]~|~  \ell \text{ \cadlag and increasing, }~\ell_{0-} = 0,~\ell_{\infty} = 1\},
\end{equation}
where $\overline{\R}$ is the two-point compactification of $\mathbb{R}$. We endow $M$ with the topology induced by the L\'evy metric, i.e., $\ell^n \to \ell$ in $M$ if and only if $\ell^n_t \to \ell_t$ for every $t \in [0,\infty]\setminus\operatorname{Disc}(\ell)$. This topology turns $M$ into a compact Polish space. Furthermore, we define $\ExtendedE:= C([0,\infty)) \times M$ and endow it with the product topology. For $\alpha >0$, we define $\iota_{\alpha} \colon \ExtendedE \rightarrow D([-1,\infty))$ for $w \in C([0,\infty))$ and $\ell \in M$ as 
\begin{align}\label{eq::iotadef}
\iota_{\alpha}(w,\ell)_t := \begin{cases} 
w_0 \quad & t \in [-1,0) \\
w_t - \alpha\ell_t \quad & t \in [0,\infty).
\end{cases}
\end{align}
Throughout the paper, we denote by $x$ a generic element of $D([-1,\infty))$, by $w$ a generic element of $C([0,\infty))$ and by $\ell$ a generic element of $M$.
If $S$ is a Polish space, we denote the space of probability measures on $S$ by $\mathcal{P}(S)$ and endow it with the topology of weak convergence,
i.e., we say that $\mu_n \to \mu$ in $\mathcal{P}(S)$ iff $\int_{S} F(x) \mathrm{d}\mu_n (x) \to \int_{S}F(x) \mathrm{d}\mu (x)$ for all $F \in C_b (S;\mathbb{R})$, where $C_b (S;\mathbb{R})$ denotes the space of continuous bounded functions from $S$ to $\mathbb{R}$.
If $\mu \in \mathcal{P}(S)$ and $F \colon S \rightarrow \mathbb{R}$, we denote the integral of $F$ with respect to $\mu$ also with brackets, i.e., we write 
$\int_{S}F(x)~\mathrm{d}\mu(x) = \langle \mu, F \rangle$. Furthermore, if $\nu$ is the pushforward of the measure $\mu$ with respect to the map $T$, we denote this by $T(\mu) = \nu$. In particular, with a slight abuse of notation, if $\xi$ denotes a measure on $\ExtendedE$, $\iota_{\alpha}(\xi)$ denotes its pushforward on $D([-1, \infty))$.

\section{Convergence of the time-stepping scheme}
\label{sec:time-stepping}
In this section we prove convergence of an implicit version of the time-stepping scheme considered in \cite{kaushansky2020convergence} for more general driving processes than Brownian motion.  
Specifically, consider the following McKean--Vlasov problem 
\begin{equation}\label{eq:mckeanproblem}
\left\{
\begin{aligned}
X_t &= X_{0-} + Z_t - \L_t, \\ 
\tau &= \inf\{t \geq 0: X_t \leq 0 \}, \\
\Lambda_t &=\alpha\P{\tau \leq t},
\end{aligned}\right.
\end{equation}
where $X_{0-} \in \R,~ \alpha > 0$, and $Z$ is a continuous 
stochastic process with $Z_0 = 0$ independent of $X_{0-}$ satisfying the following \emph{crossing property}, %i.e.,
%we have 
\begin{equation}\label{eq:crossingproperty}
\P{\tau < \infty, \inf_{0\leq s \leq h} (Z_{\tau+s}-Z_{\tau}) = 0} = 0, \quad h > 0,
\end{equation}
for every stopping time $\tau$ with respect to the natural filtration of $X_{0-}+Z$. Note that the crossing property
we use here is slightly more general than the one defined in \cite{delarue2015particle,cuchiero2020propagation,nadtochiy2020mean} since we require \eqref{eq:crossingproperty} to hold for any finite stopping time, not just for the hitting time of zero. Note also that \eqref{eq:crossingproperty} holds for every stopping time if and only if it holds for every bounded stopping time.

\begin{example}
\label{ex:cross}
\begin{enumerate}[a)]
\item Let $Z:= M + Y$, where $M$ is a continuous local martingale and $Y$ is 1/2-Hölder continuous on compacts. Suppose additionally that for every $K \in \mathbb{N}$ there is a strictly positive random variable $\epsilon_K$ such that $\langle M \rangle_t - \langle M \rangle_s \geq \epsilon_K (t-s)$ for $0 \leq s \leq t \leq K$, where $\langle M \rangle$ is the quadratic variation of $M$. Then, the proof of Theorem 3.5 in \cite{bender2012simple} shows that $Z$ satisfies the crossing property \eqref{eq:crossingproperty}. In particular, we may choose $Z$ to be Brownian motion.
\item The process $Z:=B^{H}$, where $B^H$ is fractional Brownian motion with Hurst parameter $H \in (0,1)$ satisfies the crossing property \eqref{eq:crossingproperty} by \cite[Theorem 1.1]{peyre2017fractional}.
\end{enumerate}
\end{example}

We start by establishing existence (and uniqueness) of minimal solutions which are defined as follows.
We call a solution $(\Xmin,\taumin,\Lmin)$ to the McKean--Vlasov problem \eqref{eq:mckeanproblem} \emph{minimal},
if for every solution $(X,\tau, \L)$ to \eqref{eq:mckeanproblem} we have
\begin{equation}\label{eq:minimaldef}
\Lmin_t \leq \L_t, \quad t \geq 0.
\end{equation}

We introduce the fixed-point operator $\Gamma$, defined for $\ell \in M$,
\begin{equation}\label{eqdef:Gamma}
\left\{
\begin{aligned}
X_t[\ell] &= X_{0-} + Z_t - \alpha \ell_t, \\ 
\tau[\ell] &= \inf\{t \geq 0: X_t[\ell] \leq 0 \}, \\
\Gamma[\ell]_t &= \P{\tau[\ell]\leq t}.
\end{aligned}\right.
\end{equation}
As the next proposition shows, $\Gamma$ is continuous on $M$.

\begin{proposition}\label{thm:gammacont}
The operator $\Gamma \colon M \rightarrow M$ is continuous.
\end{proposition}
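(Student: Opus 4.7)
The goal is to show that if $\ell^n \to \ell$ in $M$, then $\Gamma[\ell^n]_t \to \Gamma[\ell]_t$ at every continuity point $t$ of $\Gamma[\ell]$. Since $\Gamma[\ell]$ is by construction the cumulative distribution function of $\tau[\ell]$, convergence in the Lévy metric on $M$ is exactly convergence in distribution of the hitting times $\tau[\ell^n]$ to $\tau[\ell]$. By Portmanteau it therefore suffices to establish almost-sure convergence $\tau[\ell^n] \to \tau[\ell]$ along the given sequence (the deterministic $\ell^n, \ell$ are the same on every probability space, so a.s.\ arguments cause no trouble).

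The first step is pathwise convergence $X[\ell^n](\omega) \to X[\ell](\omega)$ in $D([0,\infty))$ equipped with the Skorokhod $M_1$ topology for every $\omega$. For monotone \cadlag functions, pointwise convergence at continuity points of the limit is equivalent to $M_1$ convergence, so $\alpha \ell^n \to \alpha \ell$ in $M_1$; since $Z(\omega)$ is continuous and $M_1$ convergence is preserved under addition of a continuous function, the claim follows.

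The heart of the argument is to promote this to convergence of the hitting-time functional $\phi(x) := \inf\{t \geq 0 : x_t \leq 0\}$ at $X[\ell](\omega)$. On the event $\{\tau[\ell] < \infty\}$, apply the crossing property \eqref{eq:crossingproperty} at the stopping time $\tau[\ell]$ of the natural filtration of $X_{0-}+Z$: almost surely there exist arbitrarily small $s>0$ with $Z_{\tau[\ell]+s} < Z_{\tau[\ell]}$. Combined with monotonicity of $\ell$ and $X[\ell]_{\tau[\ell]} \leq 0$, the identity $X[\ell]_{\tau[\ell]+s} = X[\ell]_{\tau[\ell]} + (Z_{\tau[\ell]+s}-Z_{\tau[\ell]}) - \alpha(\ell_{\tau[\ell]+s}-\ell_{\tau[\ell]})$ yields strict negativity of the right-hand side for some small $s$. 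This strict crossing gives $\limsup_n \phi(X[\ell^n]) \leq \phi(X[\ell])$. For the matching lower bound, $X[\ell]$ is bounded below by a positive constant on any closed subinterval of $[0, \tau[\ell])$; the monotonicity of $\ell^n, \ell$ (so that jumps of $X[\ell^n]$ and $X[\ell]$ are purely downward and inherited from the $\ell^n$) allows one to choose parametric representations of the $M_1$ convergence that respect the ordering of jumps, and hence preserve uniform positivity on these subintervals, yielding $\liminf_n \phi(X[\ell^n]) \geq \phi(X[\ell])$. The case $\tau[\ell] = \infty$ is handled by truncating to bounded time windows and applying the same argument.

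The principal obstacle is the lower bound in the regime where $\ell$ has jumps. An $M_1$ approximation can shift jump locations, and one must rule out "phantom" zero-crossings of $X[\ell^n]$ strictly before $\tau[\ell]$; monotonicity of the $\ell^n$ is the essential tool, constraining how jumps can be resolved in the $M_1$ parametric representation. Once a.s.\ convergence $\tau[\ell^n] \to \tau[\ell]$ is in hand, Portmanteau closes the argument and proves continuity of $\Gamma \colon M \to M$.
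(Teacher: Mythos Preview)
Your approach is sound and, at the conceptual level, tracks the paper's proof closely. The paper also passes through $M_1$-convergence of $X[\ell^n]$ to $X[\ell]$ (packaged as continuity of the map $\iota_\alpha$ via \cite[Theorem~4.2]{cuchiero2020propagation}) and then invokes a continuity result for the hitting-time/loss functional under the crossing property (the analogue of \cite[Lemma~5.5]{cuchiero2020propagation}). You are reproving those cited lemmas pathwise, which is a legitimate and arguably more self-contained route.

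The upper bound via the crossing property is correct; the only missing remark is that once you have $s>0$ with $X[\ell]_{\tau[\ell]+s}<0$, continuity of $Z$ lets you perturb $s$ into the co-countable set of continuity points of $\ell$, so that $X[\ell^n]_{\tau[\ell]+s}\to X[\ell]_{\tau[\ell]+s}<0$ and $\limsup_n\tau[\ell^n]\le\tau[\ell]+s$.

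The lower bound, however, is where your write-up has a genuine gap. The appeal to ``parametric representations of the $M_1$ convergence that respect the ordering of jumps'' is not a recognisable argument, and in fact the running-infimum functional is \emph{not} $M_1$-continuous at paths with downward jumps in general, so you cannot simply read off uniform positivity from $M_1$-closeness. What saves you is the specific structure (monotone $\ell^n$, continuous $Z$), and the clean way to exploit it bypasses $M_1$ entirely: suppose along a subsequence $\tau[\ell^n]\le t<\tau[\ell]$, pick $s_n\in[0,t]$ with $X[\ell^n]_{s_n}\le 0$, and extract $s_n\to s^*\in[0,t]$. For any continuity point $r>s^*$ of $\ell$ one eventually has $s_n<r$, hence $\ell^n_{s_n}\le\ell^n_r\to\ell_r$; letting $r\downarrow s^*$ gives $\limsup_n\ell^n_{s_n}\le\ell_{s^*}$. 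Combined with $Z_{s_n}\to Z_{s^*}$ this forces $X[\ell]_{s^*}\le 0$, contradicting $s^*\le t<\tau[\ell]$. This is the substance behind your sketch, and once inserted the proof is complete.
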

\begin{proof}
Suppose that $\ell^n \to \ell$ in $M$. Define 
\begin{equation*}
\Emeasure^{n} := \law((X_{0-}+Z,\ell^n)), \quad
\Emeasure := \law((X_{0-}+Z,\ell)).
\end{equation*}
Then, $\Emeasure^{n} \to \Emeasure$ in $\mathcal{P}(\ExtendedE)$. Set $\mu_n := \iota_{\alpha} (\Emeasure^{n}),~ \mu := \iota_{\alpha}(\Emeasure)$ and note that $\langle \mu_n, \indfunc_t \rangle = \Gamma[\ell^n]_t,~ \langle \mu, \indfunc_t \rangle = \Gamma[\ell]_t,$ where $ \indfunc$ was defined in Section \ref{sec::notation}.

By Theorem 4.2 in \cite{cuchiero2020propagation} and the continuous mapping theorem, $\mu_n \to \mu$ in $\mathcal{P}(D([-1,\infty)))$. For $x \in D([-1,\infty))$ set $\tau_0(x) := \inf\{t\geq 0 \colon x_t \leq 0\}$ and define $\tau_{\ell}(w):= \inf\{t \geq 0 \colon w_t - \alpha \ell_t \leq 0\}$. Fixing $h>0$, we compute 
\begin{align}\label{eq::mucrossing}
& \mu\big(\big\{x \in D([-1,\infty)) \colon \tau_{0}(x) < \infty, \inf_{0\leq s\leq h} (x_{\tau_0+s} - x_{\tau_0}) = 0  \big\}\big) \\
&= \Emeasure\big(\big\{(w,\bar{\ell}) \in \ExtendedE\colon \tau_{0}(w-\alpha \bar{\ell}) < \infty, \inf_{0\leq s\leq h} [(w_{\tau_0+s} - w_{\tau_0}) -\alpha (\bar{\ell}_{\tau_0+s} - \bar{\ell}_{\tau_0})] = 0 \big\}\big) \notag \\
&\leq \Emeasure \big(\big\{(w,\bar{\ell}) \in \ExtendedE\colon \tau_{\ell} < \infty, \inf_{0\leq s\leq h} (w_{\tau_{\ell}+s} - w_{\tau_{\ell}}) = 0 \big\}\big) \notag \\
&= \P{\tau_{\ell} < \infty,\inf_{0 \leq s \leq h} (Z_{\tau_{\ell}+s} - Z_{\tau_{\ell}}) = 0} = 0 \notag,
\end{align}
where the inequality is due to the fact that $\bar{\ell} \in M$ is increasing.
Noting that the analogue of Lemma 5.5 in \cite{cuchiero2020propagation} holds with the assumption that \eqref{eq::mucrossing} vanishes for every $h>0$, it follows that
\begin{equation*}
\lim_{n\to\infty} \Gamma[\ell^n]= \lim_{n\to\infty} \langle \mu_n, \indfunc_{\cdot} \rangle = \langle \mu, \indfunc_{\cdot} \rangle = \Gamma[\ell].
\end{equation*}
\end{proof}

\begin{theorem}
There is a (unique) minimal solution to \eqref{eq:mckeanproblem}, and it is given by
\begin{equation}
\Lmin = \alpha\lim_{k\to\infty} \Gamma^{(k)}[0],
\end{equation}
where the convergence is understood as $\Gamma^{(k)}[0] \to \alpha^{-1}\Lmin$ in $M$.
\begin{proof}
This is analogous to the proof of Proposition 2.3 in \cite{cuchiero2020propagation}, making use of Proposition \ref{thm:gammacont} above.
\end{proof}
\end{theorem}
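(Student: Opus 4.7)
The plan is to mimic the monotone Picard iteration argument of Proposition 2.3 in \cite{cuchiero2020propagation}, with Proposition \ref{thm:gammacont} playing the role of the continuity input. The key structural observation I would establish first is monotonicity of $\Gamma$ with respect to the pointwise order on $M$: if $\ell \leq \ell'$ pointwise, then $X_t[\ell'] = X_{0-} + Z_t - \alpha \ell'_t \leq X_t[\ell]$, hence $\tau[\ell'] \leq \tau[\ell]$ almost surely, so $\Gamma[\ell'] \geq \Gamma[\ell]$ pointwise. This uses only the definition \eqref{eqdef:Gamma} and requires no properties of $Z$ beyond those already assumed.

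Starting from $\ell^{(0)} := 0 \in M$, one has $\Gamma[0] \geq 0$, and then by monotonicity an easy induction shows that $(\Gamma^{(k)}[0])_{k \in \mathbb{N}}$ is pointwise increasing and bounded by $1$. Hence the pointwise limit $\ell^\ast := \lim_k \Gamma^{(k)}[0]$ exists and is increasing; by passing to its right-continuous modification (which only changes it at its at-most-countably many discontinuities), one obtains $\ell^\ast \in M$ and convergence $\Gamma^{(k)}[0] \to \ell^\ast$ in the Lévy metric, since Lévy convergence only tests continuity points of the limit. Then Proposition \ref{thm:gammacont} gives $\Gamma[\ell^\ast] = \lim_k \Gamma[\Gamma^{(k)}[0]] = \ell^\ast$, so $\ell^\ast$ is a fixed point of $\Gamma$. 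Setting $\Lmin := \alpha \ell^\ast$ and letting $\Xmin, \taumin$ be defined through \eqref{eq:mckeanproblem} produces a solution.

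For minimality, I would take any solution $(X, \tau, \L)$ to \eqref{eq:mckeanproblem} and set $\ell := \alpha^{-1} \L \in M$, so that $\Gamma[\ell] = \ell$ by construction. From $0 \leq \ell$ and monotonicity of $\Gamma$, an induction yields $\Gamma^{(k)}[0] \leq \Gamma^{(k)}[\ell] = \ell$ for every $k$, and passing to the pointwise limit gives $\ell^\ast \leq \ell$, i.e. $\Lmin \leq \L$. Uniqueness of the minimal solution is then immediate from the defining inequality \eqref{eq:minimaldef}.

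The main obstacle I anticipate is the verification that the monotone limit lies in $M$ and that the convergence takes place in the topology for which Proposition \ref{thm:gammacont} is formulated. This is where the choice of the Lévy metric is important: pointwise limits of increasing $M$-valued sequences are automatically increasing with $\ell^\ast_{0-} = 0$ and $\ell^\ast_\infty = 1$, and after right-continuous modification they are càdlàg, while Lévy convergence is insensitive to the (countably many) modifications. Once this is in place, the continuity of $\Gamma$ supplied by Proposition \ref{thm:gammacont} makes the fixed-point passage routine, and the existence-cum-minimality argument closes.
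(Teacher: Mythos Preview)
Your proposal is correct and follows precisely the monotone Picard iteration argument of Proposition 2.3 in \cite{cuchiero2020propagation} that the paper invokes, with Proposition~\ref{thm:gammacont} supplying the continuity step; indeed, the paper spells out the same structure explicitly in the discretized setting (Proposition~\ref{thm::DeltaMinimal}). The only cosmetic point is that $\ell^\ast_\infty = 1$ is automatic since each $\Gamma^{(k)}[0]$ is the cdf of a $[0,\infty]$-valued random variable, so no separate verification is needed there.
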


% In the following definition we recall the time-stepping scheme  considered in \cite{kaushansky2020convergence} in the current setup.

% \begin{definition}[time-stepping scheme]\label{def:timestepping}
% For $\Delta > 0$, let $(X_{n \Delta}^{\Delta})_{n\in \N}$ and $(\Ltime_{n \Delta}^{\Delta})_{n\geq 1}$ be defined recursively by $X_{0}^{\Delta}=X_{0}$,
% \begin{equation}\Ltime_{n \Delta}^{\Delta}= \mathbb{P}\left(\min _{0 \leq m \leq n-1} X_{m \Delta}^{\Delta}\leq0\right)\text{ and } X_{n \Delta}^{\Delta}=X_{0-}+Z_{n \Delta}-\Ltime_{n \Delta}^{\Delta},  \quad n \geq 1.
% \end{equation}
% We set $\Lambda_{0}^{\Delta}=0$ and extrapolate via $\Ltime_{t}^{\Delta}:=\Ltime_{\disceval{t}}^{\Delta}$, as well as
% $X_{0-}^{\Delta}=X_{0-}$ and $X_{t}^{\Delta}=X_{0-}+Z_{t}^{\Delta}-\Ltime_{t}^{\Delta},$ where $Z_{t}^{\Delta} := Z_{\disceval{t}}$ for $t \geq 0.$
% \end{definition}

% \begin{remark}
% Note that Definition \ref{def:timestepping} agrees with Definition 1.2 in \cite{kaushansky2020convergence} (in the case where $Z$ is Brownian motion) except that we set $\Ltime_0^{\Delta}$ equal to zero instead of $\P{X_{0-}\leq 0}$. 
% \end{remark}

% The main result of this section is the following theorem.

For $\Delta > 0$, define a time-discretized version $Z^{\Delta}$ by $Z^{\Delta}_t := Z_{\disceval{t}}$ for $t \geq 0$. In analogy to the situation in continuous time, we define a fixed-point operator and use it to construct the minimal solution in this time-discretized version of the McKean--Vlasov problem and prove continuity of the operator in a suitable sense.

\begin{lemma}\label{lem:gammadeltacont}
For $\ell \in M$, set
\begin{equation}\label{eqdef:GammaDelta}
\left\{
\begin{aligned}
X_t^{\Delta}[\ell] &= X_{0-} + Z_t^{\Delta} - \alpha \ell_{\disceval{t}},  \\ 
\tau^{\Delta}[\ell] &= \inf\{t \geq 0: X_t^{\Delta}[\ell] \leq 0 \}, \\
\GammaDelta[\ell]_t &= \P{\tau^{\Delta}[\ell]\leq t}.
\end{aligned}\right.
\end{equation}
Suppose that $\ell_{\disceval{\cdot}}^n \to \ell_{\disceval{\cdot}}$ in $M$ and that $\ell_0^n = \ell_0$. Assume in addition that either $\law(X_{0-})$ is atomless or that $\law(Z_t)$ is atomless for every $t > 0$. Then $\lim_{n \to \infty} \GammaDelta [\ell^n] = \GammaDelta[\ell]$ in $M$.
\end{lemma}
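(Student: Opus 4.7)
The plan is to exploit the fact that the discretised process $X^\Delta_{\cdot}[\ell]$ is piecewise constant on the intervals $[k\Delta,(k+1)\Delta)$, so that $\tau^\Delta[\ell]$ takes values only in the grid $\{0,\Delta,2\Delta,\ldots\}\cup\{\infty\}$. Consequently
\begin{equation*}
\GammaDelta[\ell]_t \;=\; \P{\min_{0\le j\le \lfloor t/\Delta\rfloor}\bigl(X_{0-}+Z_{j\Delta}-\alpha\ell_{j\Delta}\bigr)\le 0},
\end{equation*}
and it suffices to prove pointwise convergence at every grid point $k\Delta$. First I would transfer the $M$-convergence hypothesis to pointwise convergence at the grid: for any $k\ge 1$, the point $k\Delta-\Delta/2$ lies in an interval on which $\ell_{\disceval{\cdot}}$ is constant, hence is a continuity point of $\ell_{\disceval{\cdot}}$, so $\ell^n_{(k-1)\Delta}=\ell^n_{\disceval{k\Delta-\Delta/2}}\to\ell_{(k-1)\Delta}$. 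Combined with the standing assumption $\ell^n_0=\ell_0$, this gives $\ell^n_{j\Delta}\to\ell_{j\Delta}$ for every $j\ge 0$.

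Next I would fix $k$, set
\begin{equation*}
Y^n \;:=\; \min_{0\le j\le k}\bigl(X_{0-}+Z_{j\Delta}-\alpha\ell^n_{j\Delta}\bigr),\qquad Y\;:=\;\min_{0\le j\le k}\bigl(X_{0-}+Z_{j\Delta}-\alpha\ell_{j\Delta}\bigr),
\end{equation*}
and note that $Y^n\to Y$ almost surely by the pointwise convergence just established. By the Portmanteau theorem, to obtain $\GammaDelta[\ell^n]_{k\Delta}=\P{Y^n\le 0}\to\P{Y\le 0}=\GammaDelta[\ell]_{k\Delta}$ it suffices to check that $0$ is not an atom of $\law(Y)$ on the relevant set. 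Isolating the $j=0$ term, which is identical to the one appearing in $Y^n$ thanks to $\ell_0^n=\ell_0$, I would write
\begin{equation*}
\{Y\le 0\}\;=\;\{X_{0-}\le\alpha\ell_0\}\;\cup\;\bigl(\{X_{0-}>\alpha\ell_0\}\cap\{\tilde Y\le 0\}\bigr),\qquad \tilde Y:=\min_{1\le j\le k}(X_{0-}+Z_{j\Delta}-\alpha\ell_{j\Delta}),
\end{equation*}
with the analogous decomposition for $Y^n$. The first probability does not depend on $n$, so the problem reduces to the second term, and the key point is to verify that $\P{\tilde Y=0}=0$.

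Here the alternative hypothesis enters: $\{\tilde Y=0\}\subseteq\bigcup_{j=1}^k\{X_{0-}+Z_{j\Delta}=\alpha\ell_{j\Delta}\}$, and since $X_{0-}$ is independent of $Z$, a standard Fubini argument shows that each of these events has probability zero under either assumption — atomlessness of $\law(X_{0-})$ handles all $j$, while atomlessness of $\law(Z_t)$ for $t>0$ handles $j\ge 1$ (which is why the index $j=0$ had to be removed by exploiting $\ell^n_0=\ell_0$). Finally, bounded convergence applied to $\mathbf 1_{\{X_{0-}>\alpha\ell_0\}}\mathbf 1_{\{\tilde Y^n\le 0\}}$ yields the desired pointwise convergence, hence convergence in $M$. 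The subtle point, and the main obstacle, is precisely this handling of $j=0$: since $Z_0=0$, the atomlessness of the driver gives nothing at the initial time, so the hypothesis $\ell_0^n=\ell_0$ is indispensable to cancel this term exactly rather than to control it probabilistically.
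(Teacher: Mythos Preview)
Your argument is correct. Both your proof and the paper's first extract pointwise convergence $\ell^n_{j\Delta}\to\ell_{j\Delta}$ from the hypothesis (the paper states this as immediate; your midpoint argument makes it explicit), and both use $\ell^n_0=\ell_0$ to neutralise the $j=0$ term where atomlessness of $Z_t$ is unavailable. The packaging differs: the paper bounds the symmetric difference directly via
\[
\GammaDelta[\ell^n]_t-\GammaDelta[\ell]_t \;\le\; \sum_{k=1}^{\lfloor t/\Delta\rfloor}\P{X_{0-}+Z_{k\Delta}\in(\alpha\ell_{k\Delta},\alpha\ell^n_{k\Delta}]}
\]
and then applies dominated convergence to each interval probability, whereas you pass through almost sure convergence $Y^n\to Y$ and invoke Portmanteau after checking $\P{\tilde Y=0}=0$. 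Your route is slightly more conceptual and avoids writing out the two-sided bound by hand; the paper's route is more explicit about where the atomlessness enters (each summand is an interval of shrinking length). Either way, the crux is the same: atomlessness of $X_{0-}$ or of $Z_{j\Delta}$ for $j\ge 1$ forces the boundary event to be null, and the $j=0$ contribution cancels exactly thanks to $\ell^n_0=\ell_0$.
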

\begin{proof}
Note that the condition $\ell_{\disceval{\cdot}}^n \to \ell_{\disceval{\cdot}}$ in $M$ implies $\ell_{k\Delta}^{n} \to \ell_{k\Delta}$ for $k \in \N$. The assumption $\ell_0^n = \ell_0$ implies $\P{\tau^{\Delta}[\ell^n]= 0, \tau^{\Delta}[\ell]>0} = 0.$ For any $t \geq 0$,
\begin{align*}
\GammaDelta[\ell^n]_t - \GammaDelta[\ell]_t &\leq \P{\tau^{\Delta}[\ell^n] \leq t,~\tau^{\Delta}[\ell] >t} \\
& = \sum_{k=1}^{\lfloor\frac{t}{\Delta}\rfloor} \P{\tau^{\Delta}[\ell^n] = k\Delta, \tau^{\Delta}[\ell]>t} \\
&\leq \sum_{k=1}^{\lfloor\frac{t}{\Delta}\rfloor} \P{ X_{k\Delta}^{\Delta}[\ell^n]\leq 0,~ X_{k\Delta}^{\Delta}[\ell] >0  } \\ 
&= \sum_{k=1}^{\lfloor\frac{t}{\Delta}\rfloor} \P{X_{0-}+Z_{k\Delta} - \alpha \ell_{k\Delta}^n \leq 0,~ X_{0-} + Z_{k\Delta} - \alpha \ell_{k\Delta} > 0} \\
&=\sum_{k=1}^{\lfloor\frac{t}{\Delta}\rfloor} \P{X_{0-} + Z_{k\Delta} \in (\alpha\ell_{k\Delta},\alpha\ell_{k\Delta}^n]}.
\end{align*}
Now if $\law(Z_t)$ is atomless for every $t>0$, using the independence of $Z$ and $X_{0-}$ we may rewrite this as
\begin{align*}
\sum_{k=1}^{\lfloor\frac{t-\Delta}{\Delta}\rfloor} \mathbb{E} \left[\P{x + Z_{k\Delta} \in (\alpha\ell_{k\Delta},\alpha\ell_{k\Delta}^n]})\vert_{x = X_{0-}} \right],
\end{align*}
and the dominated convergence theorem yields that $\limsup_{n\to\infty} \GammaDelta[\ell^n]_t \leq \GammaDelta[\ell]_t.$ The same argument works if we assume that $\law(X_{0-})$ is atomless, as we see by noticing that $\P{X_{0-} \!\! + \! Z_{k\Delta} \in (\alpha\ell_{k\Delta},\alpha\ell_{k\Delta}^n]} = \E{\P{X_{0-} \!\! + \! z \in (\alpha\ell_{k\Delta},\alpha\ell_{k\Delta}^n]} \! \vert_{z=Z_{k\Delta}}}$. Interchanging the roles of $\ell^n$ and $\ell$ in the estimates then yields the claim.
\end{proof}

\begin{definition}\label{def:discretized}
We say that $\Lambda^{\Delta}$ solves the \emph{discretized McKean--Vlasov problem} if $\alpha\Gamma_{\Delta}[\alpha^{-1}\Lambda^{\Delta}] = \Lambda^{\Delta}$. If $\underline{\Lambda}^{\Delta}$ is such that $\underline{\Lambda}^{\Delta} \leq \Lambda^{\Delta}$ for every $\Lambda^{\Delta}$ that solves the discretized McKean--Vlasov problem, we call $\underline{\Lambda}^{\Delta}$ \emph{minimal}. We shall refer to the scheme as \emph{implicit time-stepping scheme}.
\end{definition}

\begin{remark}\label{rem:implicit_explicit}
With Definition \ref{def:discretized}, $\Lambda^{\Delta}$ is a solution if and only if
\begin{align} \label{eq::lambdadiscretized}
\Lambda_{k\Delta}^{\Delta} = \alpha \mathbb{P}\left(\min_{0 \leq i \leq k} \{X_{0-} + Z_{i\Delta} - \Lambda_{i\Delta}^{\Delta}\} \leq 0\right), 
{}\end{align}
for every $k \in \mathbb{N}$. Since $\Lambda_{k\Delta}^{\Delta}$ appears 
on both sides of \eqref{eq::lambdadiscretized}, this is an implicit 
equation for $\Lambda^{\Delta}$ (the solution of which is not unique in general),
therefore we call our time-stepping scheme implicit. If we define an alternative notion of solution through \eqref{eq::lambdadiscretized} by taking the minimum on the right-hand side over $\{0,\dots,k-1\}$ instead of $\{0,\dots,k\}$, this gives a recursion for $\Lambda^{\Delta}$, and we obtain the time-stepping scheme of \cite{kaushansky2020convergence},
which we refer to as explicit in the following.
\end{remark}

\begin{proposition}\label{thm::DeltaMinimal}
There is a (unique) minimal solution of the discretized McKean--Vlasov problem and it is given by
\begin{align}\label{eq:DeltaIteration}
\underline{\Lambda}^{\Delta} = \alpha\lim_{k\to\infty} \Gamma_{\Delta}^{(k)}[0].
\end{align}
\end{proposition}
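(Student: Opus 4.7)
The plan is to mirror the continuous-time argument of the preceding theorem (itself modelled on the proof of Proposition~2.3 in \cite{cuchiero2020propagation}), with $\GammaDelta$ and Lemma \ref{lem:gammadeltacont} playing the roles of $\Gamma$ and Proposition \ref{thm:gammacont}. The main structural input I will use is \emph{monotonicity} of $\GammaDelta$: if $\ell^{(1)}\leq \ell^{(2)}$ pointwise, then pathwise $X^{\Delta}[\ell^{(1)}]\geq X^{\Delta}[\ell^{(2)}]$, so $\tau^{\Delta}[\ell^{(1)}]\geq \tau^{\Delta}[\ell^{(2)}]$, and therefore $\GammaDelta[\ell^{(1)}]\leq \GammaDelta[\ell^{(2)}]$.

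Setting $\ell^{k}:=\GammaDelta^{(k)}[0]$, the trivial bound $0\leq \GammaDelta[0]$ together with monotonicity yields $\ell^{k}\leq \ell^{k+1}$ by induction, and the sequence is bounded above by the constant $1$, so it converges pointwise to some $\ell^{*}$. Since each $\GammaDelta[\cdot]_{t}=\P{\tau^{\Delta}[\cdot]\leq t}$ is a step function with jumps only at $\{j\Delta:j\in\N\}$, so is $\ell^{*}$; hence $\ell^{*}$ is right-continuous and increasing with $\ell^{*}_{0-}=0$ and $\ell^{*}_{\infty}=1$, i.e.\ $\ell^{*}\in M$. Monotone pointwise convergence at the countable grid upgrades to convergence $\ell^{k}\to \ell^{*}$ in the L\'evy topology on $M$. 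Applying Lemma \ref{lem:gammadeltacont} gives $\GammaDelta[\ell^{k}]\to \GammaDelta[\ell^{*}]$, and comparing with $\GammaDelta[\ell^{k}]=\ell^{k+1}\to \ell^{*}$ produces the fixed-point identity $\ell^{*}=\GammaDelta[\ell^{*}]$, so that $\Lmin^{\Delta}:=\alpha\ell^{*}$ solves the discretized McKean--Vlasov problem. For minimality, if $\Lambda^{\Delta}$ is any other solution, then $\alpha^{-1}\Lambda^{\Delta}\in M$ is a fixed point of $\GammaDelta$; starting from $0\leq \alpha^{-1}\Lambda^{\Delta}$, monotonicity combined with the fixed-point property yields $\ell^{k}\leq \alpha^{-1}\Lambda^{\Delta}$ for every $k$, and passing to the limit gives $\alpha\ell^{*}\leq \Lambda^{\Delta}$. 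Uniqueness of the minimal solution is then immediate from its defining inequality.

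The subtle point will be the application of Lemma \ref{lem:gammadeltacont}: the condition $\ell^{k}_{\disceval{\cdot}}\to \ell^{*}_{\disceval{\cdot}}$ in $M$ is automatic from pointwise convergence at each grid point, but the hypothesis $\ell^{k}_{0}=\ell^{*}_{0}$ will typically fail along the iteration since $\ell^{k}_{0}$ is generally strictly increasing in $k$. Inspecting the proof of Lemma \ref{lem:gammadeltacont} reveals that this assumption is used only to secure $\P{\tau^{\Delta}[\ell^{k}]=0,\,\tau^{\Delta}[\ell^{*}]>0}=0$, which holds for free in the monotone setup: from $\ell^{k}\leq \ell^{*}$ we get $\{\tau^{\Delta}[\ell^{k}]=0\}=\{X_{0-}\leq \alpha\ell^{k}_{0}\}\subseteq\{X_{0-}\leq \alpha\ell^{*}_{0}\}=\{\tau^{\Delta}[\ell^{*}]=0\}$. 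The atomlessness assumption on $\law(X_{0-})$ or on $\law(Z_{t})$ for $t>0$ inherited from Lemma \ref{lem:gammadeltacont} is still needed to handle the dominated-convergence step at positive grid times.
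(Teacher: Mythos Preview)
Your argument is the paper's: monotonicity of $\GammaDelta$ yields an increasing sequence $\ell^{k}=\GammaDelta^{(k)}[0]$ with limit $\ell^{*}\in M$ (a step function on the grid $\Delta\N$, hence automatically right-continuous); Lemma~\ref{lem:gammadeltacont} identifies $\ell^{*}$ as a fixed point; minimality follows by the standard induction. The only divergence is in how the hypothesis $\ell^{k}_{0}=\ell^{*}_{0}$ of Lemma~\ref{lem:gammadeltacont} is handled: the paper asserts that $\GammaDelta^{(k)}[0]_{s}=\GammaDelta[0]_{s}$ for all $s\in[0,\Delta)$, so that the iterates literally coincide at $0$ and the lemma applies directly, whereas you (correctly noticing that this need not hold) argue around the hypothesis via monotonicity.

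Your workaround is incomplete, however. You rightly observe that $\ell^{k}\leq\ell^{*}$ forces $\{\tau^{\Delta}[\ell^{k}]=0\}\subseteq\{\tau^{\Delta}[\ell^{*}]=0\}$, which disposes of one direction at $t=0$. But the proof of Lemma~\ref{lem:gammadeltacont} uses $\ell^{n}_{0}=\ell_{0}$ a \emph{second} time, after ``interchanging the roles of $\ell^{n}$ and $\ell$'': the $j=0$ contribution to $\GammaDelta[\ell^{*}]_{t}-\GammaDelta[\ell^{k}]_{t}$ is
\[
\P{\tau^{\Delta}[\ell^{*}]=0,\ \tau^{\Delta}[\ell^{k}]>0}=\P{X_{0-}\in(\alpha\ell^{k}_{0},\alpha\ell^{*}_{0}]},
\]
which tends to $\P{X_{0-}=\alpha\ell^{*}_{0}}$ as $k\to\infty$. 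This vanishes under the atomlessness of $\law(X_{0-})$, but not in general under the alternative hypothesis that only $\law(Z_{t})$ is atomless for $t>0$, since $Z_{0}=0$. Your final sentence, confining the role of atomlessness to ``positive grid times'', overlooks precisely this term.
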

\begin{proof}
$\Gamma_{\Delta}$ is monotone in the sense that if $\ell^1 \leq \ell^2$ then $\Gamma_{\Delta}[\ell^1] \leq \Gamma_{\Delta}[\ell^2]$. Therefore,
\begin{equation*}
0 \leq \GammaDelta[0] \leq \GammaDelta^{(2)}[0] \leq \dots
\end{equation*}
This allows us to define $\hat \ell_t^{\Delta} :=  \lim_{k\to\infty} \GammaDelta^{(k)}[0]_t$. Then $\tilde{\ell}_t^{\Delta}:= \hat{\ell}_{t+}^{\Delta}$ lies in $M$ and by construction $\lim_{k\to\infty}\GammaDelta^{(k)}[0] = \tilde{\ell}^{\Delta}$ in $M$. Note that $\GammaDelta^{(k)}[0]$ is a step function with jumps at times $\Delta \N$, and so the same holds for $\tilde{\ell}^{\Delta}.$ This implies that $\GammaDelta^{(k)}[0]_{\disceval{t}} = \GammaDelta^{(k)}[0]_t$ and $\tilde{\ell}_{\disceval{t}}^{\Delta} = \tilde{\ell}_t^{\Delta}$ for $t \geq 0$. As $\GammaDelta^{(k)}[0]_s = \GammaDelta[0]_s$ for $0 \leq s < \Delta$, it follows that $\tilde{\ell}^{\Delta}_0 = \GammaDelta^{(k)}[0]_0$ for every $k \in \N$, and therefore we may apply Lemma \ref{lem:gammadeltacont} to find
\begin{equation*}
\GammaDelta[\tilde{\ell}^{\Delta}] = \GammaDelta[\lim_{k\to\infty}\GammaDelta^{(k)}[0]] = \lim_{k\to\infty}\GammaDelta[\GammaDelta^{(k)}[0]] = \tilde{\ell}^{\Delta},
\end{equation*}
so $\tilde{\Lambda} := \alpha\tilde{\ell}^{\Delta}$ is a solution of the 
discretized McKean--Vlasov problem. If $\Lambda^{\Delta}$ is another solution, then since $0 \leq \alpha^{-1}\Lambda^{\Delta}$, and $\Gamma^{\Delta}$ is monotone we have $\alpha\Gamma[0] \leq \alpha\Gamma[\alpha^{-1} \Lambda^{\Delta}] = \Lambda^{\Delta}$. Proceeding inductively we obtain $\alpha\Gamma^{(k)}[0] \leq \Lambda^{\Delta}$ and therefore $\tilde{\L}^{\Delta}_t \leq \Lambda^{\Delta}_t$ for every continuity point of $\tilde{\L}^{\Delta}$. By right-continuity, it follows that $\tilde{\L}^{\Delta}$ is minimal.  \end{proof}

\begin{remark}
In contrast to the explicit scheme in \cite{kaushansky2020convergence}, the time-stepping scheme we propose to solve here requires us to solve the 
implicit condition that $\underline{\Lambda}^{\Delta}$ is the minimal solution of \eqref{eq::lambdadiscretized} through the iteration \eqref{eq:DeltaIteration}. The analogous results we prove here hold as well for the explicit version of the scheme, but we expect that the explicit version smoothes out the jumps while the implicit scheme should not, 
and therefore the implicit scheme should lead to
a more accurate approximation of jump sizes and jump times. We observe this empirically in %the case of the Donsker-type approximation, see 
Section 4.
\end{remark}

We are now ready to state the main result of this section.

\begin{theorem}\label{thm:nestedconvergence}
Choose a sequence $\Delta_n > 0$ such that the resulting discretizations are nested, i.e., $\Delta_n \mathbb{N} \subseteq \Delta_{n+1}\mathbb{N}$ for all $n \in \mathbb{N}$, and such that $\lim_{n\to\infty} \Delta_n$ = 0. Assume in addition that either $\law(X_{0-})$ is atomless or that $\law(Z_t)$ is atomless for every $t>0$. Then $\lim_{n\to\infty} \Ltime^{\Delta_n} = \Lmin$ in $M$.
\end{theorem}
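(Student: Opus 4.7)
The strategy is to sandwich the discrete minimal solutions $\Ltime^{\Delta_n}$ monotonically between each other and below $\Lmin$, extract a pointwise limit by monotone convergence, and identify this limit as a solution of the continuous McKean--Vlasov problem by adapting the argument from Proposition \ref{thm:gammacont}.

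The first step establishes that $\GammaDeltan[\ell] \leq \Gamma_{\Delta_{n+1}}[\ell] \leq \Gamma[\ell]$ for every $\ell \in M$. This is a pathwise statement: because of the nesting $\Delta_n\N \subseteq \Delta_{n+1}\N$, the piecewise-constant processes $X^{\Delta_n}[\ell]$ and $X^{\Delta_{n+1}}[\ell]$ agree on the coarser grid while the latter is sampled at additional intermediate grid points (and $X[\ell]$ at all times), so $\tau[\ell] \leq \tau^{\Delta_{n+1}}[\ell] \leq \tau^{\Delta_n}[\ell]$ pathwise. Combining this with the monotonicity of $\GammaDeltan$ and $\Gamma$ in $\ell$ (noted in the proof of Proposition \ref{thm::DeltaMinimal}), induction on $k$ yields $\GammaDeltan^{(k)}[0] \leq \Gamma_{\Delta_{n+1}}^{(k)}[0] \leq \Gamma^{(k)}[0]$; sending $k\to\infty$ gives $\Ltime^{\Delta_n} \leq \Ltime^{\Delta_{n+1}} \leq \Lmin$.

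Next, $\hat{\ell}^*_t := \lim_n \alpha^{-1}\Ltime^{\Delta_n}_t$ exists for every $t$, is increasing, and is bounded by $\alpha^{-1}\Lmin$; its right-continuous modification $\tilde{\ell}^*$ lies in $M$, and $\alpha^{-1}\Ltime^{\Delta_n}\to\tilde{\ell}^*$ in $M$ because L\'evy-metric convergence reduces to pointwise convergence at continuity points of the limit. The crux is to pass to the limit in the fixed-point identity $\alpha^{-1}\Ltime^{\Delta_n} = \GammaDeltan[\alpha^{-1}\Ltime^{\Delta_n}]$ and obtain $\tilde{\ell}^* = \Gamma[\tilde{\ell}^*]$. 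I would mirror the proof of Proposition \ref{thm:gammacont}: set
\begin{equation*}
\xi^n := \law\bigl((X_{0-}+Z^{\Delta_n},\,\alpha^{-1}\Ltime^{\Delta_n})\bigr), \qquad \xi := \law\bigl((X_{0-}+Z,\,\tilde{\ell}^*)\bigr),
\end{equation*}
and note that since $\alpha^{-1}\Ltime^{\Delta_n}$ is piecewise constant on $\Delta_n\N$ by Proposition \ref{thm::DeltaMinimal}, one has $(\alpha^{-1}\Ltime^{\Delta_n})_{\discevaln{t}} = (\alpha^{-1}\Ltime^{\Delta_n})_t$, so $\langle \iota_\alpha(\xi^n), \indfunc_t \rangle = \GammaDeltan[\alpha^{-1}\Ltime^{\Delta_n}]_t$. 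The first marginal of $\xi^n$ converges almost surely uniformly on compacts by continuity of $Z$, and the second converges in $M$ by the previous paragraph, so $\xi^n \to \xi$ in $\mathcal{P}(\ExtendedE)$. Theorem 4.2 of \cite{cuchiero2020propagation} then gives $\iota_\alpha(\xi^n) \to \iota_\alpha(\xi)$ in $\mathcal{P}(D([-1,\infty)))$, and the computation in \eqref{eq::mucrossing}, using the crossing property of $Z$ at $\tau_{\tilde{\ell}^*}$ together with the monotonicity of $\tilde{\ell}^*$, rules out the pathological limit set, so the continuous-mapping theorem delivers $\GammaDeltan[\alpha^{-1}\Ltime^{\Delta_n}]_t \to \Gamma[\tilde{\ell}^*]_t$ at every continuity point of $\Gamma[\tilde{\ell}^*]$.

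Consequently $\alpha\tilde{\ell}^*$ solves \eqref{eq:mckeanproblem}, so by minimality $\Lmin \leq \alpha\tilde{\ell}^*$; together with the upper bound from the first step this forces $\alpha\tilde{\ell}^* = \Lmin$, whence $\Ltime^{\Delta_n} \to \Lmin$ in $M$. The principal obstacle is this joint passage to the limit: verifying that the crossing-property machinery of Proposition \ref{thm:gammacont} survives when the driver is replaced by its step approximation $Z^{\Delta_n}$. The key simplification that makes the argument tractable is the piecewise constancy of $\alpha^{-1}\Ltime^{\Delta_n}$ on $\Delta_n\N$, which neutralises the $\discevaln{\cdot}$-operator on the second argument and lets the crossing property of the true continuous driver $Z$ do the work in the limit.
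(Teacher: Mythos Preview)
Your argument is correct and takes a genuinely different route from the paper's. The paper proves the intermediate statement $\GammaDeltan^{(k)}[0] \to \Gamma^{(k)}[0]$ as $n\to\infty$ for each fixed $k$ (this is Proposition~\ref{prop:GammaDeltauniformconv} applied inductively), and then exploits that the double array $\GammaDeltan^{(k)}[0]_t$ is monotone in \emph{both} indices to interchange the limits $k\to\infty$ and $n\to\infty$ on a co-countable set of times, invoking Lemma~\ref{lem:Mconvergence}(c) to conclude. Your approach bypasses the finite-$k$ iterates entirely: you first order the discrete minimal solutions $\Ltime^{\Delta_n}\leq\Ltime^{\Delta_{n+1}}\leq\Lmin$, extract the monotone pointwise limit $\tilde\ell^*$, and then pass to the limit directly in the fixed-point identity $\alpha^{-1}\Ltime^{\Delta_n}=\GammaDeltan[\alpha^{-1}\Ltime^{\Delta_n}]$ to conclude that $\alpha\tilde\ell^*$ solves~\eqref{eq:mckeanproblem}; the sandwich $\Lmin\leq\alpha\tilde\ell^*\leq\Lmin$ then forces equality.

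Both arguments rest on the same technical core---the $M_1$/crossing-property machinery that the paper packages as Proposition~\ref{prop:GammaDeltauniformconv}. You essentially re-derive that proposition inline for the single sequence $\ell^n=\alpha^{-1}\Ltime^{\Delta_n}$, and your observation that the piecewise-constancy of $\Ltime^{\Delta_n}$ neutralises the $\lfloor\cdot/\Delta_n\rfloor$-operator is exactly the simplification that lets Step~1 of that proposition's proof go through trivially here. The paper's limit-swap is shorter once Proposition~\ref{prop:GammaDeltauniformconv} is isolated, while your route follows the more classical ``show the limit satisfies the equation, then invoke minimality'' pattern and makes the role of the minimality hypothesis explicit in the identification step. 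The atomless assumption enters your proof only through Proposition~\ref{thm::DeltaMinimal}, exactly as in the paper.
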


We collect some properties of the space $M$ in the next lemma.

\begin{lemma}\label{lem:Mconvergence}
Let $\ell^n, \ell \in M$. The following statements are equivalent.
\begin{enumerate}[(a)]
\item $\ell^n \to \ell$ in $M$.
\item Let $t$ be a continuity point of $\ell$. Then, for every $\epsilon > 0$, there is a $\delta > 0$ with
\begin{equation}
\limsup_{n\to\infty} \sup_{s \in [t-\delta,t+\delta]} |\ell_s^n - \ell_s| \leq \epsilon.
\end{equation}
\item There is a co-countable set $D \subseteq [0,\infty)$ such that $\ell_t^n \to \ell_t$ for $t \in D$.
\end{enumerate}
\end{lemma}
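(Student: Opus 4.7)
The plan is to prove the cyclic chain of implications (a) $\Rightarrow$ (b) $\Rightarrow$ (c) $\Rightarrow$ (a). Two properties of elements of $M$ are used repeatedly: monotonicity, which allows sandwiching values at arbitrary points between values at chosen reference points; and the fact that any monotone function has at most countably many discontinuities, so for each $\ell \in M$ the set of continuity points is dense and co-countable in $[0,\infty)$.

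For (a) $\Rightarrow$ (b), I would fix a continuity point $t$ of $\ell$ and $\varepsilon > 0$, and choose $\delta > 0$ small enough that $\ell_{t+2\delta} - \ell_{t-2\delta} < \varepsilon/2$, which is possible by continuity of $\ell$ at $t$. Using density of the continuity points of $\ell$, I pick continuity points $a \in [t-2\delta, t-\delta]$ and $b \in [t+\delta, t+2\delta]$, so that (a) gives $\ell^n_a \to \ell_a$ and $\ell^n_b \to \ell_b$. For any $s \in [t-\delta, t+\delta]$, monotonicity of $\ell^n$ and $\ell$ yields $\ell^n_a \leq \ell^n_s \leq \ell^n_b$ and $\ell_a \leq \ell_s \leq \ell_b$, hence
\begin{equation*}
|\ell^n_s - \ell_s| \leq (\ell_b - \ell_a) + |\ell^n_a - \ell_a| + |\ell^n_b - \ell_b|.
\end{equation*}
Taking the $\sup$ over $s$ and then $\limsup_n$ bounds the left-hand side by $\ell_b - \ell_a \leq \ell_{t+2\delta} - \ell_{t-2\delta} < \varepsilon/2 < \varepsilon$.

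For (b) $\Rightarrow$ (c), I take $D$ to be the set of continuity points of $\ell$ in $[0,\infty)$, which is co-countable. For $t \in D$, applying (b) with the specific choice $s = t$ and letting $\varepsilon \downarrow 0$ gives $\ell^n_t \to \ell_t$. For (c) $\Rightarrow$ (a), let $t$ be a continuity point of $\ell$ in $[0,\infty)$. Since $D$ is co-countable and hence dense, pick sequences $s_k \uparrow t$ and $r_k \downarrow t$ in $D$. Monotonicity gives $\ell^n_{s_k} \leq \ell^n_t \leq \ell^n_{r_k}$, so using (c) at $s_k$ and $r_k$,
\begin{equation*}
\ell_{s_k} \leq \liminf_{n \to \infty} \ell^n_t \leq \limsup_{n \to \infty} \ell^n_t \leq \ell_{r_k}.
\end{equation*}
Letting $k \to \infty$ and using continuity of $\ell$ at $t$ gives $\ell^n_t \to \ell_t$. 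The case $t = \infty$ is automatic since $\ell^n_\infty = \ell_\infty = 1$ for all $n$.

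The only place requiring more than a routine sandwich argument is (a) $\Rightarrow$ (b), where the uniform bound on a whole neighbourhood of $t$ must be extracted from pointwise convergence at merely two reference points; but this is handled cleanly by the three-term decomposition above, relying on the fact that continuity points of $\ell$ can be found arbitrarily close to any prescribed point.
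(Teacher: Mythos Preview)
Your argument is correct. The implication (a)~$\Rightarrow$~(b) is essentially the paper's proof: both pick two continuity points of $\ell$ bracketing $t$ and sandwich using monotonicity; the only cosmetic difference is that the paper chooses $\delta$ so that $t\pm\delta$ themselves are continuity points, while you use an auxiliary $2\delta$-window and then select $a,b$ inside it.

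The implication (c)~$\Rightarrow$~(a) is where you take a genuinely different route. The paper invokes compactness of $M$: it passes to a subsequential limit $\bar\ell$, observes that $\ell$ and $\bar\ell$ agree on the co-countable set $D\setminus\operatorname{Disc}(\bar\ell)$, and concludes $\bar\ell=\ell$ by right-continuity. Your sandwich argument via $s_k\uparrow t$ and $r_k\downarrow t$ in $D$ is more elementary, avoiding any appeal to compactness of $M$ or subsequence extraction. The paper's approach has the advantage of being a one-line application of an ambient structural fact; yours is self-contained and would work in settings where compactness is not available. One small edge case in your version: for $t=0$ there is no $s_k\uparrow t$ in $D\subseteq[0,\infty)$, but this is harmless since $\ell^n_s=0$ for $s<0$ (equivalently, $\ell^n_0\ge 0=\ell_0$ gives the lower bound for free).
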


\begin{proof}
$\left[(a) \implies (b)\right]$: Fix $\epsilon > 0$. As $\ell$ is continuous at $t$ and $\ell$ has 
at most countably many discontinuity points, there is $\delta > 0$ such that both $t+\delta$
and $t-\delta$ are continuity points and $|\ell_s - \ell_r| < \frac{\epsilon}{2}$ for $s,r \in [t-\delta, t+\delta]$.
Choose $n_0 \in \N$ large enough such that $|\ell_{t+\delta}^{n} - \ell_{t+\delta}| < \frac{\epsilon}{2}$ and $|\ell_{t-\delta}^{n} - \ell_{t-\delta}| < \frac{\epsilon}{2}$ for all $n \geq n_0$. Then, for $s \in [t-\delta, t+\delta]$ we have
\begin{align*}
\ell_s^n - \ell_s &\leq \ell_{t+\delta}^{n} - \ell_s < \ell_{t+\delta} - \ell_s + \frac{\epsilon}{2} < \epsilon, \\
\ell_s^n - \ell_s &\geq \ell_{t-\delta}^{n} - \ell_s > \ell_{t-\delta} - \ell_s - \frac{\epsilon}{2} > -\epsilon,
\end{align*}
proving that $\sup_{s \in [t-\delta,t+\delta]} |\ell_s^n - \ell_s| \leq \epsilon$ for $n \geq n_0$. The reverse implication is clear.\\

$\left[(c) \implies (a)\right]$: By compactness of $M$, after passing to a subsequence if necessary, we may assume that $\ell^n \to \bar{\ell}$ for some $\bar{\ell} \in M$. Since $D$ is co-countable, the set $D\setminus \operatorname{Disc}(\bar{\ell})$ is co-countable as well, and therefore dense in $[0,\infty)$. For $t \in D\setminus \operatorname{Disc}(\bar{\ell})$, we obtain $\ell_t = \lim_{n\to\infty} \ell_t^n = \bar{\ell}_t$, and by right-continuity it follows that $\bar{\ell} = \ell$. Hence $\ell^n \to \ell$ in $M$. The reverse implication is clear.
\end{proof}

\begin{proposition}\label{prop:GammaDeltauniformconv}
Let $\Delta_n \to 0$ and suppose that $\ell^n \to \ell$ in $M$. Then,
\begin{equation}
\lim_{n\to\infty} \GammaDeltan[\ell^n] = \Gamma[\ell].
\end{equation}
\end{proposition}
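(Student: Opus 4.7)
The plan is to decouple the two sources of error---the time-discretization of the barrier $\ell^n$ and the replacement of the driver $Z$ by its step version $Z^{\Delta_n}$---and to invoke Proposition~\ref{thm:gammacont} for the former while handling the latter through the a.s.~uniform continuity of $Z$ together with the crossing property.

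Setting $\tilde{\ell}^n_t := \ell^n_{\lfloor t/\Delta_n\rfloor \Delta_n}$, I would first verify $\tilde{\ell}^n \to \ell$ in $M$. For a continuity point $t$ of $\ell$, let $s_n := \lfloor t/\Delta_n\rfloor \Delta_n$, so that $s_n \uparrow t$. Combining the local uniform bound of Lemma~\ref{lem:Mconvergence}(b) with continuity of $\ell$ at $t$ yields $\tilde{\ell}^n_t = \ell^n_{s_n} \to \ell_t$, and Lemma~\ref{lem:Mconvergence}(c) then gives $\tilde{\ell}^n \to \ell$ in $M$. Applying Proposition~\ref{thm:gammacont} to this convergence produces $\Gamma[\tilde{\ell}^n] \to \Gamma[\ell]$ in $M$, and it therefore suffices to establish $\GammaDeltan[\ell^n] - \Gamma[\tilde{\ell}^n] \to 0$ in $M$.

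Both $\GammaDeltan[\ell^n]_t$ and $\Gamma[\tilde{\ell}^n]_t$ are first-passage distribution functions associated with the common barrier $\alpha\tilde{\ell}^n$; they differ only in the driver, $Z^{\Delta_n}$ versus $Z$. On a common probability space, define the càdlàg paths
\begin{equation*}
Y^n_s := X_{0-}+Z_s-\alpha\tilde{\ell}^n_s, \qquad \hat Y^n_s := X_{0-}+Z^{\Delta_n}_s-\alpha\tilde{\ell}^n_s,
\end{equation*}
and observe that, by continuity of $Z$, $\sup_{0\leq s \leq T}|Y^n_s - \hat Y^n_s| = \sup_{0\leq s\leq T}|Z_s - Z^{\Delta_n}_s| \to 0$ a.s.~for every $T>0$. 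Hence $\hat\mu_n := \law(\hat Y^n)$ and $\mu_n := \law(Y^n)$ become arbitrarily close in the Prokhorov metric on $\mathcal{P}(D([-1,\infty)))$. Mimicking the proof of Proposition~\ref{thm:gammacont} with $\tilde{\ell}^n$ in place of $\ell^n$ (which is permitted since $Y^n$ is of the form $\iota_{\alpha}(X_{0-}+Z,\tilde{\ell}^n)$) shows that $\mu_n \to \mu := \law(X_{0-}+Z-\alpha\ell)$ in $\mathcal{P}(D([-1,\infty)))$, and therefore $\hat\mu_n \to \mu$ as well.

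The crossing property \eqref{eq:crossingproperty} applied at the stopping time $\tau[\ell]$, exactly as in the computation around \eqref{eq::mucrossing}, shows that the bad event has zero $\mu$-mass, so the analogue of Lemma~5.5 in \cite{cuchiero2020propagation} gives $\langle \hat\mu_n,\indfunc_\cdot\rangle \to \langle \mu,\indfunc_\cdot\rangle$ in $M$, which is exactly $\GammaDeltan[\ell^n] \to \Gamma[\ell]$ in $M$. The main obstacle is the step $\hat\mu_n \to \mu$: the pre-limit measures now involve the non-continuous step process $Z^{\Delta_n}$, so the continuous-time result of Proposition~\ref{thm:gammacont} cannot be applied directly. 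The workaround is to observe that uniform closeness on compacts of $Y^n$ and $\hat Y^n$ (which is strictly stronger than $M_1$-closeness) reduces the problem to the already-handled convergence $\mu_n \to \mu$, and the rest is a standard application of the continuous mapping theorem with the crossing property supplying the almost-sure continuity of $\indfunc_t$ under the limit.
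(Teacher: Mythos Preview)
Your argument is correct and rests on exactly the same three ingredients as the paper's proof: (i) $\tilde{\ell}^n \to \ell$ in $M$, established via Lemma~\ref{lem:Mconvergence}(b); (ii) $Z^{\Delta_n} \to Z$ uniformly on compacts almost surely, from the path continuity of $Z$; and (iii) the crossing property~\eqref{eq:crossingproperty} to justify the continuous-mapping step for $\indfunc_t$ at the limit measure.

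The organization differs slightly. The paper bundles (i) and (ii) into a single convergence statement $\Emeasure_n = \law((X_{0-}+Z^{\Delta_n},\tilde{\ell}^n)) \to \Emeasure = \law((X_{0-}+Z,\ell))$ on $\ExtendedE$, and then pushes forward through $\iota_\alpha$ in one stroke, repeating the reasoning of Proposition~\ref{thm:gammacont}. You instead announce a decoupling into $\Gamma[\tilde{\ell}^n] \to \Gamma[\ell]$ plus a remainder $\GammaDeltan[\ell^n]-\Gamma[\tilde{\ell}^n] \to 0$, but in executing the second half you end up proving the full statement $\GammaDeltan[\ell^n] \to \Gamma[\ell]$ directly via a Prokhorov triangle inequality on $\mathcal{P}(D([-1,\infty)))$, so the intermediate object $\Gamma[\tilde{\ell}^n]$ becomes redundant. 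Your route does have the small advantage of handling explicitly the fact that $Z^{\Delta_n}$ is a step function and hence not literally an element of the first factor $C([0,\infty))$ of $\ExtendedE$; the paper's formulation glosses over this, relying on the uniform closeness of $Z^{\Delta_n}$ to $Z$ to make the pushforward argument go through.
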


\begin{proof}
Step 1: We first show that $(X_{0-}+Z^{\Delta_n},\ell_{\discevaln{\cdot}}^{n}) \to (X_{0-}+Z,\ell)$ in $\ExtendedE$. Fix $T>0$. For $\omega \in \Omega$ and $\epsilon > 0$, by uniform continuity there almost surely is $\delta > 0$ such that $|Z_{t}(\omega) - Z_{s}(\omega)| < \epsilon$ whenever $|s-t|<\delta$ and $s,t \in [0,T]$. For $\Delta_n < \delta$, we then have 
\begin{equation*}
|Z_{t}(\omega) - Z_{t}^{\Delta_n}(\omega)| < \epsilon, \quad t \in [0,T],
\end{equation*}
and hence $\lim_{n\to\infty} \|Z - Z^{\Delta_n}\|_{C([0,T])} = 0$ almost surely. 
Let $t$ be a continuity point of $\ell$. Write $g^n := \ell_{\discevaln{\cdot}}^{n}$. Fix $\epsilon > 0$, and choose $\delta > 0$ as in Lemma \ref{lem:Mconvergence}$.(b)$. We then have
\begin{equation*}
\limsup_{n\to\infty}|g_t^n - \ell_t| \leq \limsup_{n\to\infty} \sup_{s\in [t-\delta,t+\delta]}|\ell_s^n - \ell_s| + \limsup_{n\to\infty} |\ell_{\discevaln{t}} - \ell_t| \leq \epsilon. 
\end{equation*}
As $\epsilon > 0$ was arbitrary, we see that $g^n \to \ell$ in $M$. \\

Step 2: Setting $\Emeasure_n := \law((X_{0-}+Z^{\Delta_n},g^n))$ and $\Emeasure := \law((X_{0-}+Z,\ell))$, by Step 1 we have $\Emeasure_n \to \Emeasure$ in $\mathcal{P}(\ExtendedE)$. By the same reasoning as in the proof of Proposition~\ref{thm:gammacont} we obtain that $\langle \iota_\alpha (\Emeasure^n),\indfunc_{\cdot}\rangle \to \langle \iota_\alpha (\Emeasure), \indfunc_{\cdot}\rangle$ in $M$ with $\indfunc$ being defined in Section \ref{sec::notation}.
 Since $\GammaDeltan[\ell^n]_t = \langle \iota_\alpha (\Emeasure^n),\indfunc_{t}\rangle$ and $\Gamma[\ell]_t = \langle \iota_\alpha (\Emeasure), \indfunc_{t}\rangle$, this yields the claim.
\end{proof}

We are now prepared to prove  Theorem \ref{thm:nestedconvergence}.

\begin{proof}[Proof of Theorem \ref{thm:nestedconvergence}]

A straightforward induction using Proposition \ref{prop:GammaDeltauniformconv} shows that $\lim_{n\to\infty} \GammaDeltan^{(k)}[0] = \Gamma^{(k)}[0]$. Since $\Delta_n \mathbb{N} \subseteq \Delta_{n+1} \mathbb{N}$, it holds that $\Gamma_{\Delta_n}[\ell] \leq \Gamma_{\Delta_{n+1}}[\ell]$ for any $\ell \in M$. Set $J:= \operatorname{Disc}(\Lmin) \cup \{\operatorname{Disc}(\Gamma^{(k)}[0])~|~ k \in \N\} \cup \bigcup_{n\geq 0} \Delta_n \N$. Then $J$ is countable, and for $t \notin J$ we have by Proposition \ref{thm::DeltaMinimal}
\begin{align*}
\Lmin_t = \alpha\lim_{k\to\infty} \Gamma^{(k)}[0]_t = \alpha\lim_{k\to\infty} \lim_{n\to\infty} \GammaDeltan^{(k)}[0]_t = \alpha\lim_{n\to\infty} \lim_{k\to\infty} \GammaDeltan^{(k)}[0]_t = \lim_{n\to\infty}  \Ltime^{\Delta_n},
\end{align*}
where we may exchange the order of the limits because $\GammaDeltan^{(k)}[0]_t$ is increasing both in $n$ and in $k$. Lemma \ref{lem:Mconvergence}$.(c)$ yields the claim.
\end{proof}

\section{A Donsker-type approximation}
\label{sec:donsker}

In this section, we consider the numerical approximation of a special case of the McKean--Vlasov problem \eqref{eq:mckeanproblem} with  $Z = B$, with $B$ a Brownian motion independent of $X_{0-}$. The idea is to replace the Brownian motion by its Donsker approximation, i.e., for $h>0$ \footnote{To distinguish the two schemes we use here $h$ rather than $\Delta$ for the time step.} we consider

\begin{equation}\label{eq:mckeandonskerproblem}
\left\{
\begin{aligned}
X_t^{h} &= X_{0-}^{h} + B_t^{h} - \L_t^{h}, \\ 
\tau^{h} &= \inf\{t \geq 0: X_t^{h} \leq 0 \}, \\
\Lambda_t^{h} &=\alpha\P{\tau^{h} \leq t},
\end{aligned}\right.
\end{equation}
where
\begin{equation}
B_{t}^h = \sqrt{h}\sum_{k=0}^{\lfloor t/h \rfloor} Y_{k},
\end{equation}
 and $(Y_i)_{i\in\N}$ is an i.i.d.\ sequence of random variables satisfying $\mathbb{E}[Y_1] = 0$ and $\mathbb{E}[Y_1^2]=1$.
We repeat the same reasoning as previously, obtaining the minimal solution of \eqref{eq:mckeandonskerproblem} through a fixed-point iteration. We call a solution $\underline{\L}^h$ of \eqref{eq:mckeandonskerproblem} minimal, if $\underline{\L}^h \leq \L^h$ for every $\L^h$ that solves \eqref{eq:mckeanproblem}. The minimal solution will then give rise to the implicit Donsker approximation scheme presented in Section \ref{sec:numerics}. 

\begin{lemma}\label{lem:Gammahfixedpoint}
Assume that $X_{0-}^h$ is atomless. Define the operator $\Gamma_h$ via
\begin{align*}
X_t^{h}[\ell] &= X_{0-}^h + B_t^{h} - \alpha \ell_{\discevalh{t}}, \\ 
\tau^{h}[\ell] &= \inf\{t \geq 0: X_t^{h}[\ell] \leq 0 \}, \\
\Gamma_h[\ell]_t &:=\P{\tau^{h}[\ell] \leq t}.
\end{align*}
Then it holds that
\begin{align}\label{eq::fixedpointdonsker}
\Ldonskermin^h =  \alpha \lim_{k\to\infty} \Gamma_h^{(k)}[0].
\end{align}
\end{lemma}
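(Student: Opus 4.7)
The plan is to imitate the proof of Proposition \ref{thm::DeltaMinimal}, with the operator $\Gamma_\Delta$ replaced by $\Gamma_h$ and the time-discretized driver $Z^\Delta$ replaced by the Donsker random walk $B^h$. Structurally this is an order-theoretic fixed-point argument: build the candidate minimal solution as a pointwise monotone limit of Picard iterates starting from $\ell \equiv 0$, verify that the limit is a fixed point via a continuity result for $\Gamma_h$, and check minimality by induction using monotonicity.

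First I would record that $\Gamma_h$ is monotone: if $\ell^1 \le \ell^2$ pointwise, then $X^h[\ell^1] \ge X^h[\ell^2]$ pathwise, hence $\tau^h[\ell^1] \ge \tau^h[\ell^2]$ and so $\Gamma_h[\ell^1] \le \Gamma_h[\ell^2]$. Therefore the iterates satisfy
\[
0 \le \Gamma_h[0] \le \Gamma_h^{(2)}[0] \le \dots \le 1,
\]
so $\hat{\ell}_t^h := \lim_{k\to\infty} \Gamma_h^{(k)}[0]_t$ exists pointwise. Because each iterate $\Gamma_h^{(k)}[0]$ is a right-continuous step function with jumps only at times in $h\mathbb{N}$, so is the limit; setting $\tilde{\ell}_t^h := \hat{\ell}_{t+}^h$ yields an element of $M$ that agrees with $\hat{\ell}^h$ on $h\mathbb{N}$, and convergence $\Gamma_h^{(k)}[0] \to \tilde{\ell}^h$ holds in $M$ by Lemma \ref{lem:Mconvergence}$.(c)$. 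In addition, $\Gamma_h^{(k)}[0]_0 = \P{X_{0-}^h \le 0}$ for all $k \ge 1$, so the initial values are matched.

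The key step is the Donsker analog of Lemma \ref{lem:gammadeltacont}, namely: if $\ell^n_{\discevalh{\cdot}} \to \ell_{\discevalh{\cdot}}$ in $M$ and $\ell^n_0 = \ell_0$, then $\Gamma_h[\ell^n] \to \Gamma_h[\ell]$ in $M$. I would repeat the telescoping estimate verbatim, obtaining for every $t \ge 0$
\[
\Gamma_h[\ell^n]_t - \Gamma_h[\ell]_t \le \sum_{k=1}^{\lfloor t/h\rfloor} \P{X_{0-}^h + B_{kh}^h \in (\alpha \ell_{kh},\, \alpha \ell_{kh}^n]}.
\]
The Brownian argument in Lemma \ref{lem:gammadeltacont} used atomlessness of $\law(Z_t)$, which is unavailable here because Donsker increments are typically lattice-valued. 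This is precisely the point where the hypothesis that $X_{0-}^h$ is atomless enters: conditioning on $B_{kh}^h = b$ and using independence,
\[
\P{X_{0-}^h + B_{kh}^h \in (\alpha \ell_{kh},\, \alpha \ell_{kh}^n]} = \E{\P{X_{0-}^h + b \in (\alpha \ell_{kh},\, \alpha \ell_{kh}^n]}\Big|_{b=B_{kh}^h}},
\]
and the continuity of the distribution function of $X_{0-}^h$ together with dominated convergence gives $\limsup_n \Gamma_h[\ell^n]_t \le \Gamma_h[\ell]_t$. Reversing the roles of $\ell^n$ and $\ell$ gives convergence.

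Applying this continuity with $\ell^n = \Gamma_h^{(n)}[0]$ and $\ell = \tilde{\ell}^h$ yields $\Gamma_h[\tilde{\ell}^h] = \lim_n \Gamma_h^{(n+1)}[0] = \tilde{\ell}^h$, so $\tilde{\Lambda}^h := \alpha \tilde{\ell}^h$ solves \eqref{eq:mckeandonskerproblem}. For minimality, if $\Lambda^h$ is any other solution then $0 \le \alpha^{-1}\Lambda^h$; applying $\alpha \Gamma_h$ and using monotonicity inductively gives $\alpha \Gamma_h^{(k)}[0] \le \Lambda^h$ for every $k$, and passing to the limit yields $\tilde{\Lambda}^h \le \Lambda^h$ at continuity points of $\tilde{\Lambda}^h$, hence everywhere by right-continuity. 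The only real obstacle is the continuity step, where the discreteness of $B^h$ forces the argument to route atomlessness through $X_{0-}^h$ rather than through the driver; once that is in place, the remainder is a straightforward transcription of the proof of Proposition \ref{thm::DeltaMinimal}.
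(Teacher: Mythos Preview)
Your proposal is correct and follows the same approach as the paper: establish the analog of Lemma~\ref{lem:gammadeltacont} for $\Gamma_h$ (routing atomlessness through $X_{0-}^h$ since $B^h$ may be lattice-valued), then repeat the monotone fixed-point construction of Proposition~\ref{thm::DeltaMinimal}. The paper's own proof is terse and merely points to these earlier results; you have correctly unpacked the details.
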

\begin{proof}
By the same arguments given in Lemma \ref{lem:gammadeltacont}, if $\ell_{\discevalh{\cdot}}^n \to \ell_{\discevalh{\cdot}}$ in $M$,
then $\lim_{n\to\infty} \Gamma_h [\ell^n] = \lim_{n\to\infty} \Gamma_h [\ell]$ in $M$. Reasoning as in the proof of Theorem \ref{thm:nestedconvergence}, using Step 1 of Proposition \ref{prop:GammaDeltauniformconv}, we obtain \eqref{eq::fixedpointdonsker}.
\end{proof}

\begin{remark}\label{rem:finitetimeminimal}
If we were to pose problem \eqref{eq:mckeandonskerproblem} on a finite time horizon $[0,T]$, the analogous
result to Lemma \eqref{lem:Gammahfixedpoint} would hold by the same reasoning. In particular, since the solution operator
for the finite time horizon is the restriction to $[0,T]$ of $\Gamma_h$, the representation \eqref{eq::fixedpointdonsker} shows
that the minimal solution for the problem on $[0,T]$ is simply the restriction of the global minimal solution. 
\end{remark}

From here on, we will denote by $\Ldonskermin^{h}$ the minimal solution in the Donsker approximation with initial condition 
\begin{equation} \label{eq::init} X_{0-}^{h} = 
\left(\left\lfloor \frac{X_{0-}}{\sqrt{h}} \right\rfloor + \left\lfloor\log(h)^2 \right\rfloor\right)\sqrt{h}.
\end{equation}
The reasons for this choice are twofold: first, to make the theoretical results applicable to the numerical scheme of Section \ref{sec:numerics}, we replace $X_{0-}$ by a space-discretized version of itself, given by $\left\lfloor \frac{X_{0-}}{\sqrt{h}} \right\rfloor \sqrt{h}$; second,
for technical reasons, we require a perturbation term of order $\mathcal{O}(\sqrt{h}\log(h)^2)$ in the initial condition.\\

 The goal of this section is to show that, with the above choice of initial condition \eqref{eq::init}, 
 %the default probabilities corresponding to the minimal solution of the %Donsker approximation \eqref{eq:mckeandonskerproblem} converge to the default probability corresponding to the minimal solution of the McKean--Vlasov problem %\eqref{eq:mckeanproblem}, i.e., we intend to show that
  $\Ldonskermin^{h} \to \Lmin$ in $M$ as $h \to 0$. We require some additional assumptions. % to prove this.

\begin{assumptions} \label{as:Donsker} We assume that
\begin{enumerate}[(i)]	
\item $X_{0-}$ has a bounded density, which we denote by $V_{0-}$. \label{as:bounded_density}
\item The moment generating function of $Y_1$ exists in a neighbourhood of $0$, i.e., $\mathbb{E}[\exp(uY_1)] < \infty$ for some $\delta > 0$ and $|u| < \delta$.
\label{as:mgf}
\end{enumerate}
\end{assumptions}

\subsection{Convergence results}
On an abstract level, the proof of convergence has two main parts: (i) proving that limit points of solutions to the Donsker problem are solutions to the McKean--Vlasov problem; (ii) identifying limit points of minimal solutions of the Donsker problem as the minimal solution of the McKean--Vlasov problem. 
Typically, part (ii) is the more challenging one. Fortunately, a similar approach as in \cite{cuchiero2020propagation}
works.

The central idea of this approach is to find a sequence of optimization problems that allows us to relate the minimal solution of the (perturbed) Donsker problem to the minimal solution of the McKean--Vlasov equation, showing that the latter dominates the former asymptotically as $h \to 0$.

The following lemma constitutes the main technical ingredient of this proof. As a rough intuition, setting $\ell = \Lmin$, the lemma allows us to quantify how far away the minimal solution of the McKean--Vlasov problem is from being a solution to the Donsker problem when $h$ is small.

\begin{lemma}\label{lem:Gammahconvergencerate}
Suppose that Assumptions \ref{as:Donsker} are satisfied. Then, there is a constant $C > 0$ depending on $T$ and $\|V_{0-}\|_{\infty}$ such that for all sufficiently small $h > 0$ it holds % that
\begin{equation}\label{eq:Gammahconvergencerate}
\sup_{\ell \in M}\sup_{t \in [0,T]} |\Gamma_h [\ell]_t - \Gamma[\ell]_t | \leq C \sqrt{h}\log(T/h),
\end{equation}
where $\Gamma_h$ is defined as in Lemma \ref{lem:Gammahfixedpoint} with $X_{0-}^h = \left\lfloor\frac{X_{0-}}{\sqrt{h}} \right \rfloor \sqrt{h}$.
\end{lemma}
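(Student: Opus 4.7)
The plan is to strongly couple the Donsker walk $B^h$ with a standard Brownian motion $B$ via the Koml\'os--Major--Tusn\'ady (Sakhanenko) embedding, then reformulate both $\Gamma[\ell]_t$ and $\Gamma_h[\ell]_t$ as expectations of a Lipschitz CDF evaluated at the running supremum of $\alpha\ell-B$ (resp.\ a grid version of it), and finally convert pathwise errors into probability errors using the bounded density of $X_{0-}$. Assumption~\ref{as:mgf} (exponential moments of $Y_1$) is precisely the hypothesis needed to apply KMT, which yields a joint realisation of $B^h$ and $B$ on one space with $\sup_{t\in[0,T]}|B_t-B^h_t|\le C_1\sqrt{h}\,\log(T/h)$ almost surely, and the bound also in expectation. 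I would combine this with L\'evy's modulus of continuity $\sup_{|s-u|\le h,\,s,u\le T}|B_s-B_u|\le C_2\sqrt{h\log(T/h)}$ a.s., which is of the same order and controls all $h$-scale Brownian fluctuations.

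\textbf{CDF reformulation.} Using independence of $X_{0-}$ from $B$ and of $X_{0-}^h$ from $B^h$, I would write
\[ \Gamma[\ell]_t=\mathbb{E}\bigl[F_{0-}(A_t)\bigr], \qquad \Gamma_h[\ell]_t=\mathbb{E}\bigl[F_{0-}^h(A^h_t)\bigr], \]
where $F_{0-},F_{0-}^h$ denote the CDFs of $X_{0-},X_{0-}^h$ respectively, $A_t:=\sup_{s\in[0,t]}(\alpha\ell_s-B_s)$ and $A^h_t:=\max_{k\le\lfloor t/h\rfloor}(\alpha\ell_{kh}-B^h_{kh})$. Assumption~\ref{as:bounded_density} gives that $F_{0-}$ is $\|V_{0-}\|_\infty$-Lipschitz, and since $X_{0-}^h\in[X_{0-}-\sqrt h,X_{0-}]$ one immediately has $\|F_{0-}^h-F_{0-}\|_\infty\le\|V_{0-}\|_\infty\sqrt h$, which handles the initial-condition quantisation.

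\textbf{Comparing the two running suprema.} The core of the argument is bounding $\mathbb{E}[|F_{0-}(A_t)-F_{0-}(A^h_t)|]$. The easy direction, $A^h_t\le A_t+C_1\sqrt h\log(T/h)$, follows from the monotonicity $\ell_{kh}\le\ell_s$ for $s\ge kh$ combined with the KMT bound. For the reverse inequality, I pick a near-maximiser $s^*$ of $A_t$ on $[0,\lfloor t/h\rfloor h]$ and, using $\ell_{s^*}\le\ell_{(\lfloor s^*/h\rfloor+1)h}$ together with KMT and the L\'evy modulus on the interval $[s^*,(\lfloor s^*/h\rfloor+1)h]$, compare $\alpha\ell_{s^*}-B_{s^*}$ to the corresponding grid value and hence to $A^h_t$, losing only $O(\sqrt h\log(T/h))$. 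The $\|V_{0-}\|_\infty$-Lipschitz property of $F_{0-}$ then converts this pathwise gap into an $O(\sqrt h\log(T/h))$ probability gap.

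\textbf{Main obstacle.} The genuinely delicate point is that the sup is taken over all $\ell\in M$, and such $\ell$ may carry jumps of arbitrary size inside the fractional interval $(\lfloor t/h\rfloor h,t]$ which is invisible to $\Gamma_h$ but contributes to $\Gamma$; consequently a purely pathwise estimate of $|A_t-A^h_t|$ need not be $o(1)$ in this boundary regime. The remedy I would implement is to bound $F_{0-}(A_t)-F_{0-}(A^h_t)$ directly as the conditional probability that $X_{0-}$ lies in the random strip $(A^h_t,A_t]$; conditioning on $(B^h,B)$ up to $\lfloor t/h\rfloor h$, this strip can only become relevant when $X_{0-}$ itself is within a density-weighted window of the running minimum, and the bounded density of $X_{0-}$ dominates the remaining contribution by $\|V_{0-}\|_\infty$ times the Brownian fluctuation on a window of length at most $h$, which is $O(\sqrt{h\log(T/h)})$. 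Summing all contributions yields the claimed bound $C\sqrt h\log(T/h)$ uniformly in $\ell\in M$ and $t\in[0,T]$, with $C$ depending only on $T$ and $\|V_{0-}\|_\infty$.
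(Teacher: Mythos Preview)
Your route via KMT strong coupling is different from the paper's, which works entirely through the \emph{Prokhorov metric}. The paper invokes a known Donsker rate $\rho_P(B^{h},W)\le C\sqrt{h}\log(1/h)$ in the uniform metric on $D[0,1]$, transfers it to $[0,T]$ by scaling, and then pushes it forward via a Lipschitz-mapping theorem for the Prokhorov distance (Theorem~\ref{thm:lipschitzmapping} and Corollary~\ref{cor:kolmogorovlipschitzmapping}) applied to the functional $g_t(x_0,x)=\inf_{0\le s\le t}(x_0+x_s-\alpha\ell_s)$. The decisive point---and the one you are missing---is that the paper keeps the \emph{undiscretised} $\ell_s$ in this functional for \emph{both} sides, writing $\Gamma[\ell]_t=\mathbb{P}(g_t(X_{0-},B)\le 0)$ and identifying $\Gamma_h[\ell]_t$ with $\mathbb{P}(g_t(X_{0-}^h,B^h)\le 0)$. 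Because $g_t(x_0,\cdot)$ is $1$-Lipschitz on path space uniformly in $\ell$ (Lemma~\ref{lem:infestimate}), the Prokhorov rate yields the bound uniformly over $M$ with no case analysis on where $\ell$ jumps. Your KMT argument could have followed the same line: with $\tilde A_t^{\,h}:=\sup_{s\le t}(\alpha\ell_s-B^h_s)$ one has $|\tilde A_t^{\,h}-A_t|\le\sup_{s\le T}|B_s-B^h_s|$, and the coupling bound closes immediately.

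By contrast, your direct comparison of $A_t$ with the \emph{grid} quantity $A^h_t=\max_{k\le\lfloor t/h\rfloor}(\alpha\ell_{kh}-B^h_{kh})$ has a genuine gap, and the fix you sketch for the ``main obstacle'' is incorrect. You assert that the residual contribution is dominated by $\|V_{0-}\|_\infty$ times a Brownian fluctuation over the last cell, but the width of the strip $(A^h_t,A_t]$ is driven by $\alpha(\ell_t-\ell_{\lfloor t/h\rfloor h})$, not by a Brownian increment, and this can be of order $\alpha$. Concretely, take $\ell=\mathbf 1_{[t,\infty)}$ with $t$ off the grid: then $A_t\ge \alpha-B_t$ while $A^h_t=\max_{k\le\lfloor t/h\rfloor}(-B^h_{kh})$, so $\mathbb{E}\bigl[F_{0-}(A_t)-F_{0-}(A^h_t)\bigr]$ is of order one and your bound fails. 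The conditioning argument you outline cannot repair this, since the strip width is a deterministic function of $\ell$ and not of the Brownian path; the bounded density of $X_{0-}$ only converts strip width to probability, it does not shrink the width. What makes the paper's argument go through is precisely that it never discretises $\ell$ when passing to $B^h$, so the $\ell$-contribution cancels and only the path difference $B-B^h$ remains.
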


The proof of the above lemma requires some preparations.
The convergence rate in \eqref{eq:Gammahconvergencerate}
is essentially the convergence rate of the Donsker approximation with respect to the Prokhorov metric, which we transfer to the estimate above through a Lipschitz mapping theorem.

%\subsection{A Lipschitz mapping theorem for the Prokhorov metric}
\begin{definition}\label{def:prokhorov}
Let $(S,d)$ be a Polish space. For $\mu, \nu \in \mathcal{P}(S)$ define the Prokhorov distance between $\mu$ and $\nu$ as
\begin{equation}\label{eq:prokhorovmetric}\rho_{P}^{S}(\mu,\nu) = \inf\{\epsilon > 0 : \mu(B) \leq \nu(B^\epsilon) + \epsilon ~\text{for all Borel sets } B\},\end{equation}
where $B^\epsilon = \{x \in S \colon d_S(x,B) < \epsilon\}$. If $X,Y$ are random variables on $S$ we will write $\rho_P^{S}(X,Y)$ in place of $\rho_P(\law(X),\law(Y))$. 
\end{definition}
It is well-known that the Prokhorov metric metrizes weak convergence (in the probabilistic sense) and $(\mathcal{P}(S),\rho_P^{S})$ is a Polish space. By \cite[Theorem 3.4.2]{whitt2002stochastic}, Lipschitz maps preserve the Prokhorov metric in the sense that $\rho_P^{S}(g(X),g(Y)) \leq (1\vee K) \rho_P^{S} (X,Y)$ for any $K$-Lipschitz map $g$. We will need a slight generalization of this result, which allows $g$ to depend on an additional random variable that is independent of the Lipschitz component. 

\begin{theorem}\label{thm:lipschitzmapping}
Let $(S,d),(S',d'),(R,d_R)$ be Polish spaces and suppose that $g \colon S' \times S \rightarrow R$ is $K$-Lipschitz in the second variable, i.e., 
\begin{equation}
d_R(g(x,y),g(x,z)) \leq K d_S(y,z), \quad x \in S',~ y,z \in S.
\end{equation}
Suppose furthermore that the random variables $X,Y,Z$ satisfy $X \perp Y$, $X\perp Z$. Then,
\begin{equation}
\rho_P^R (g(X,Y),g(X,Z)) \leq (1 \vee K) \rho_P^{S}(Y,Z).
\end{equation}
\end{theorem}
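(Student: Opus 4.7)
The plan is to reduce the statement to the standard Lipschitz mapping theorem (\cite[Theorem 3.4.2]{whitt2002stochastic}, which is the case $S' = \{*\}$) via a coupling argument that uses the independence hypotheses to replace the shared random variable $X$ in $g(X,Y)$ and $g(X,Z)$ by a fresh, independent copy. The central observation is that $X \perp Y$ and $X \perp Z$ (separately) are enough to pull this off: one does not need the stronger $X \perp (Y,Z)$.

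First, fix $\eps > \rho_P^S(Y,Z)$. By Strassen's coupling characterization of the Prokhorov distance, there exists a probability space carrying random variables $Y', Z'$ with $Y' \sim Y$, $Z' \sim Z$ and $\mathbb{P}(d(Y', Z') > \eps) \leq \eps$. Enlarging this space by a product with $\law(X)$, introduce $\tilde X \sim X$ independent of the pair $(Y', Z')$. Because $X \perp Y$ and $\tilde X \perp Y'$ with the matching one-dimensional marginals, the joint law of $(\tilde X, Y')$ equals that of $(X, Y)$; similarly $(\tilde X, Z') \sim (X, Z)$. Consequently $(g(\tilde X, Y'), g(\tilde X, Z'))$ is a valid coupling of $g(X,Y)$ and $g(X,Z)$.

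Next, apply the $K$-Lipschitz hypothesis pointwise in the first argument to conclude
\[
d_R\bigl(g(\tilde X, Y'), g(\tilde X, Z')\bigr) \leq K\, d(Y', Z') \quad \text{a.s.}
\]
If $K \leq 1$, then $\{d_R(g(\tilde X, Y'), g(\tilde X, Z')) > \eps\} \subseteq \{d(Y',Z') > \eps\}$, a set of probability at most $\eps$; if $K > 1$, the analogous inclusion with threshold $K\eps$ on the left has probability at most $\eps \leq K\eps$. In either case the constructed coupling witnesses $\rho_P^R(g(X,Y), g(X,Z)) \leq (1 \vee K)\eps$. Letting $\eps \downarrow \rho_P^S(Y,Z)$ yields the claim.

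There is no real obstacle here beyond the bookkeeping of probability spaces: everything interesting sits in the independent-copy trick, after which the argument becomes pointwise in $\tilde X$ and is reduced to the known scalar Lipschitz mapping result. If one wished to avoid even that much machinery, one could instead verify the Prokhorov defining inequality $\mu(B) \leq \nu(B^{(1\vee K)\eps}) + (1 \vee K)\eps$ directly for Borel sets $B \subseteq R$ by conditioning on $\tilde X$ and using Fubini, but the coupling approach is cleaner.
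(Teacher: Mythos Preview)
Your proof is correct and follows essentially the same route as the paper's: both invoke Strassen's coupling characterization of the Prokhorov metric to obtain a near-optimal coupling $(Y',Z')$, form a product space to introduce an independent copy of $X$, use the separate independence hypotheses $X\perp Y$, $X\perp Z$ to identify the joint laws, and then apply the pointwise Lipschitz bound. The only cosmetic difference is that the paper appeals to the Strassen--Dudley representation theorem to obtain an \emph{exact} optimal coupling (so that the Ky--Fan distance equals $\rho_P^S(Y,Z)$) and phrases the remaining computation via the Ky--Fan metric, whereas you work with an $\eps$-approximate coupling and pass to the limit $\eps\downarrow\rho_P^S(Y,Z)$; this avoids naming the Ky--Fan metric but is mathematically equivalent.
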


\begin{proof}
We may assume wlog that $K \geq 1$. By the Strassen-Dudley representation theorem \cite[Corollary 11.6.4]{dudley2018real}, there is a probability space $(\tilde{\Omega}, \tilde{\mathbb{P}})$ with random variables $\tilde Y, \tilde Z$ satisfying $\law(\tilde Y) = \law(Y), \law(\tilde Z) = \law(Z)$ such that $\rho_P^{S}(Y,Z) = \rho_K(\tilde Y, \tilde Z)$, where $\rho_{K}$ is the Ky-Fan metric, i.e.,
\begin{equation*}
\rho_K(\tilde Y, \tilde Z) = \inf\{\epsilon > 0 \colon \tilde{\mathbb{P}}(d(\tilde Y,\tilde Z)> \epsilon) \leq \epsilon\}.
\end{equation*} 
Now set $\overline{\Omega} = \Omega \times \tilde{\Omega}$ and $\overline{\mathbb{P}} = \mathbb{P} \otimes \,\tilde{\mathbb{P}}$, then
$\law((X,\tilde Y)) = \law((X,Y))$ and $\law((X,\tilde Z)) = \law((X,Z))$ by construction of $(\overline{\Omega},\overline{\mathbb{P}})$ and the independence assumption. In particular, we have $\rho_P^{R}(g(X,Y),g(X,Z)) = \rho_P^{R}(g(X,\tilde Y), g(X,\tilde Z))$. Noting the general fact that $\rho_P \leq \rho_K$, we consider $\epsilon > 0$ such that $\tilde{\mathbb{P}}(d(\tilde Y,\tilde Z) > \epsilon/K) \leq \epsilon / K$. Then,
\begin{equation*}
\overline{\mathbb{P}}(d_R(g(X,\tilde Y),g(X,\tilde Z)) > \epsilon) \leq \overline{\mathbb{P}}(d(\tilde Y, \tilde Z) > \epsilon/K) \leq \epsilon/K \leq \epsilon.
\end{equation*}
It follows that
\begin{equation*} \rho_K (g(X,\tilde Y),g(X,\tilde Z)) \leq \inf\{\epsilon > 0 \colon \tilde{\mathbb{P}}(d(\tilde Y, \tilde Z)>\epsilon /K) \leq \epsilon /K\} = K \rho_K(\tilde Y, \tilde Z).
\end{equation*}
We have obtained that
\begin{equation*}
\rho_P^{R}(g(X,Y), g(X,Z)) \leq \rho_K(g(X,\tilde Y), g(X,\tilde Z)) \leq K \rho_K(\tilde Y, \tilde Z) = K \rho_P^{S}(Y,Z).
\end{equation*}
\end{proof}

\begin{example}
We illustrate with an example that Theorem \ref{thm:lipschitzmapping} does not hold in general without the independence assumption. We consider $S = S' = R = \mathbb{R}$, we set $\law((X,Y)) = \frac{1}{2} \delta_{(0,0)} + \frac{1}{2} \delta_{(1,1)},$ and $\law((X,Z)) = \frac{1}{2} \delta_{(0,1)} + \frac{1}{2} \delta_{(1,0)}$. Then, $\law(X) = \law(Y) = \law(Z) = \frac{1}{2} \delta_0 + \frac{1}{2} \delta_1$, so in particular $\rho^S_P (Y,Z) = 0$. However, taking $g(x,y) := |x-y|$, we see that $\law(g(X,Y)) = \delta_0$ and $\law(g(X,Z)) = \delta_1$, so $\rho^R_P(g(X,Y),g(X,Z)) = 1.$ 
\end{example}

\begin{corollary}\label{cor:kolmogorovlipschitzmapping}
Suppose $g:S' \times S \rightarrow \mathbb{R}$ is $K$-Lipschitz in the second variable. Suppose that $X \perp Y, X \perp Z$ and that the cdf of
$g(X,Y)$ is Lipschitz with constant $L$. Then, for all $x \in \mathbb{R}$ we have
\begin{equation}|\mathbb{P}(g(X,Y) \leq x) - \mathbb{P}(g(X,Z)\leq x)| \leq (1+L)(1 \vee K)\rho_P^{R} (Y,Z).
\end{equation}
\end{corollary}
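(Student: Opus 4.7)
The plan is to combine two ingredients: a Prokhorov-distance bound between $g(X,Y)$ and $g(X,Z)$ obtained from Theorem~\ref{thm:lipschitzmapping}, and the standard Prokhorov-to-Kolmogorov upgrade for probability measures on $\mathbb{R}$ whose CDF is Lipschitz.

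First I would invoke Theorem~\ref{thm:lipschitzmapping} with target space $R = \mathbb{R}$, whose hypotheses are met verbatim here, yielding
\[
\rho_P^{\mathbb{R}}\bigl(g(X,Y),\,g(X,Z)\bigr) \leq (1\vee K)\,\rho_P^{S}(Y,Z).
\]
Writing $\mu := \law(g(X,Y))$ and $\nu := \law(g(X,Z))$ with CDFs $F_\mu,F_\nu$, the corollary then reduces to the generic inequality $|F_\mu(x) - F_\nu(x)| \leq (1+L)\,\rho_P^{\mathbb{R}}(\mu,\nu)$ for all $x \in \mathbb{R}$, valid whenever $F_\mu$ is $L$-Lipschitz.

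To prove this second step, I would fix $x \in \mathbb{R}$ and any $\epsilon > \rho_P^{\mathbb{R}}(\mu,\nu)$, and apply the defining inequality of Definition~\ref{def:prokhorov} (together with its symmetric counterpart, which holds as the Prokhorov metric is symmetric) to the half-lines $(-\infty, x]$ and $(-\infty, x-\epsilon]$, using $(-\infty,a]^{\epsilon} = (-\infty,a+\epsilon)$. This produces the two-sided sandwich
\[
F_\mu(x-\epsilon) - \epsilon \leq F_\nu(x) \leq F_\mu(x+\epsilon) + \epsilon,
\]
and the $L$-Lipschitz property of $F_\mu$ upgrades both ends to $|F_\mu(x) - F_\nu(x)| \leq (1+L)\epsilon$. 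Letting $\epsilon \downarrow \rho_P^{\mathbb{R}}(\mu,\nu)$ and inserting the first-step bound completes the proof.

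There is no genuine obstacle: the one subtle point is that only $F_\mu$ is assumed Lipschitz, so the Prokhorov inequality has to be applied to the \emph{shifted} half-line $(-\infty,x-\epsilon]$ in one direction rather than symmetrically to $(-\infty,x]$ in both, which would have required Lipschitz continuity of $F_\nu$ as well.
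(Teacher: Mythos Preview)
Your proposal is correct and follows essentially the same route as the paper: both arguments bound $\rho_P^{\mathbb{R}}(g(X,Y),g(X,Z))$ via Theorem~\ref{thm:lipschitzmapping} and then convert the Prokhorov bound into a Kolmogorov bound through the L\'evy-type sandwich $F_\mu(x-\epsilon)-\epsilon \le F_\nu(x) \le F_\mu(x+\epsilon)+\epsilon$, using the $L$-Lipschitz property of $F_\mu$ alone. You are in fact slightly more explicit than the paper about how the sandwich is derived from Definition~\ref{def:prokhorov} and why only one Lipschitz CDF is needed.
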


\begin{proof}
Denote the cdf of $g(X,Y)$ by $F$ and that of $g(X,Z)$ by $G$. It holds that
\begin{align*}
\sup_{x \in \mathbb{R}} |G(x) - F(x)| &\leq (1+L) \inf\{\epsilon > 0 \colon F(x-\epsilon) - \epsilon \leq G(x) \leq F(x+\epsilon) + \epsilon\, ~\forall x \in \mathbb{R}\} \\ &\leq (1+L) \rho_P^{R} (g(X,Y),g(X,Z)).
\end{align*}
The claim now follows from Theorem \ref{thm:lipschitzmapping}.
\end{proof}

We state two technical lemmata for future reference.

\begin{lemma}\label{lem:infestimate}
Let $x^1,x^2$ be \cadlag paths. Then,
\begin{align*}
|\inf_{0\leq s \leq t} x_s^1 - \inf_{0\leq s \leq t} x_s^2| \leq \sup_{0 \leq s \leq t} |x_s^1 - x_s^2|
\end{align*}
\end{lemma}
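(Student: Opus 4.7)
The plan is to establish this estimate directly from the definitions, using that the map $x \mapsto \inf_{0 \leq s \leq t} x_s$ is $1$-Lipschitz with respect to the uniform seminorm on $[0,t]$. The main idea is the following elementary two-step argument: first, for every $s \in [0,t]$ we have
\begin{equation*}
x_s^1 \leq x_s^2 + |x_s^1 - x_s^2| \leq x_s^2 + \sup_{0 \leq r \leq t} |x_r^1 - x_r^2|,
\end{equation*}
so taking the infimum over $s \in [0,t]$ on both sides yields
\begin{equation*}
\inf_{0 \leq s \leq t} x_s^1 \leq \inf_{0 \leq s \leq t} x_s^2 + \sup_{0 \leq s \leq t}|x_s^1 - x_s^2|.
\end{equation*}
Second, I would interchange the roles of $x^1$ and $x^2$ to obtain the symmetric inequality, and combine the two to conclude.

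There is essentially no obstacle to this argument: the \cadlag hypothesis plays no substantive role (the same estimate holds for arbitrary bounded real-valued functions), and is only relevant insofar as it guarantees that both the infimum and the supremum on the compact interval $[0,t]$ are finite and make sense pathwise. Since the statement is genuinely a one-line observation about the Lipschitz continuity of the minimum functional, I would present the proof compactly in the paper, with the two displayed inequalities above followed by the remark that swapping indices and taking absolute values finishes the argument.
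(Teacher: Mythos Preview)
Your argument is correct and is essentially the same one-line Lipschitz estimate the paper gives: the paper picks an $\epsilon$-near-minimiser $r$ of $x^2$ and compares $x_r^1$ with $x_r^2$, while you take the infimum directly on both sides of the pointwise bound, which is just the same step without the $\epsilon$. Either way one obtains $\inf x^1 - \inf x^2 \le \sup |x^1-x^2|$ and concludes by symmetry.
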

\begin{proof}
Let $\epsilon > 0$ and choose $r \in [0,t]$ such that $ x_{r}^2 \leq \inf_{0\leq s\leq t} x_s^2 + \epsilon$. Then,
\begin{align*}
\inf_{0\leq s \leq t} x_s^1 - \inf_{0\leq s \leq t} x_s^2 \leq x_{r}^1 - x_{r}^{2} + \epsilon \leq \sup_{0\leq s \leq t}|x_{s}^1 -x_{s}^2| + \epsilon.
\end{align*}
Reversing the roles and noting that $\epsilon > 0$ was arbitrary proves the claim.
\end{proof}

\begin{lemma}\label{lem::X0bounds}
Let $X_{0-}$ admit a bounded density, denoted by $V_{0-}$ and let $Y^1,Y^2$ be \cadlag and such that $X_{0-}$ is independent of $(Y^1,Y^2)$.
Then, it holds that
\begin{align*}
\left|\mathbb{P}\left(\inf_{0\leq s\leq t} X_{0-}+Y_{s}^1\leq 0\right) - \mathbb{P}\left(\inf_{0\leq s\leq t}X_{0-}+Y_s^2 \leq 0\right)\right|
 \leq \|V_{0-}\|_{\infty} \E{\sup_{0\leq s \leq t} |Y_s^1 - Y_s^2|}.
\end{align*} 
\end{lemma}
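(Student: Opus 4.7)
The plan is to reduce the problem to a bound on a Lipschitz CDF evaluated at path-dependent arguments, exploiting the independence hypothesis and the boundedness of the density. The starting observation is that since $X_{0-}$ is a constant with respect to the supremum in $s$, we can pull it out:
\begin{equation*}
\inf_{0 \leq s \leq t}(X_{0-}+Y_s^i) = X_{0-} + \inf_{0 \leq s \leq t}Y_s^i =: X_{0-} + I^i, \qquad i=1,2.
\end{equation*}
Thus each probability in the statement takes the form $\mathbb{P}(X_{0-} \leq -I^i)$.

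Next I would use the independence of $X_{0-}$ from $(Y^1,Y^2)$, and hence from $(I^1,I^2)$, to condition on the paths and write
\begin{equation*}
\mathbb{P}(X_{0-} + I^i \leq 0) = \mathbb{E}\bigl[F_{X_{0-}}(-I^i)\bigr],
\end{equation*}
where $F_{X_{0-}}$ denotes the CDF of $X_{0-}$. Because $X_{0-}$ admits the bounded density $V_{0-}$, the function $F_{X_{0-}}$ is globally Lipschitz with constant $\|V_{0-}\|_\infty$, since for $a \geq b$,
\begin{equation*}
F_{X_{0-}}(a) - F_{X_{0-}}(b) = \int_b^a V_{0-}(x)\,\mathrm{d}x \leq \|V_{0-}\|_\infty (a-b).
\end{equation*}

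Combining these ingredients gives
\begin{equation*}
\bigl|\mathbb{P}(X_{0-}+I^1 \leq 0) - \mathbb{P}(X_{0-}+I^2 \leq 0)\bigr| \leq \mathbb{E}\bigl|F_{X_{0-}}(-I^1) - F_{X_{0-}}(-I^2)\bigr| \leq \|V_{0-}\|_\infty \,\mathbb{E}|I^1 - I^2|.
\end{equation*}
To finish, I would invoke Lemma \ref{lem:infestimate} applied pathwise to $Y^1$ and $Y^2$, which yields $|I^1-I^2| \leq \sup_{0\leq s\leq t}|Y_s^1 - Y_s^2|$; taking expectations produces the claimed inequality.

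Overall this is a short and essentially mechanical argument; there is no real obstacle provided one is careful that the Fubini step which writes the probability as $\mathbb{E}[F_{X_{0-}}(-I^i)]$ uses the full independence assumption $X_{0-}\perp (Y^1,Y^2)$ (and not merely pairwise independence), so that conditioning on the paths $Y^i$ leaves the conditional law of $X_{0-}$ unchanged. The Lipschitz estimate on $F_{X_{0-}}$ and the pathwise infimum bound from Lemma \ref{lem:infestimate} then deliver the constants in the stated form.
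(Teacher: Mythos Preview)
Your proof is correct and follows essentially the same approach as the paper's: both arguments condition on $(Y^1,Y^2)$ via independence, use the bounded density to control the probability that $X_{0-}$ falls in a small interval (equivalently, that the CDF of $X_{0-}$ is $\|V_{0-}\|_\infty$-Lipschitz), and invoke Lemma~\ref{lem:infestimate} to bound the difference of running infima. The paper phrases the middle step as $\mathbb{P}(X_{0-}\in I)=\mathbb{E}\bigl[\int \ind{I}(x)V_{0-}(x)\,\mathrm{d}x\bigr]$ for the random interval $I$ between $-\inf Y^1$ and $-\inf Y^2$, which is just the same Lipschitz estimate unpacked.
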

\begin{proof}
Set $I := [-\inf_{s \leq t} Y_s^1, -\inf_{s \leq t}, Y_s^2] \cup [-\inf_{s \leq t} Y_s^1, -\inf_{s \leq t}, Y_s^2]$ and note that
by Lemma \ref{lem:infestimate}, the length of $I$ is bounded by $\sup_{0\leq s \leq t} |Y_s^1 - Y_s^2|$. We argue as in the proof of Lemma 2.1 in \cite{ledger2020uniqueness}, using the assumption on $X_{0-}$ and the general inequality $\mathbb{P}(A) - \mathbb{P}(B) \leq \mathbb{P}(A\setminus B)$,
\begin{align*}
&\left|\mathbb{P}\left(\inf_{0\leq s\leq t} X_{0-}+Y_{s}^1\leq 0\right) - \mathbb{P}\left(\inf_{0\leq s\leq t}X_{0-}+Y_s^2 \leq 0\right) \right|\\
&\leq \mathbb{P}\left(X_{0-} \in I\right) \\
&= \E{\int_{0}^{\infty} \ind{I}(x) V_{0-}(x)~dx} \\
&\leq \|V_{0-}\|_{\infty} \E{\sup_{0\leq s \leq t} |Y_s^1 - Y_s^2|}. 
\end{align*}
\end{proof}

\begin{proof}[Proof of Lemma \ref{lem:Gammahconvergencerate}]
Step 1: We transfer the convergence rate on $[0,1]$ to $[0,T]$ for $T>0$. Set 
\begin{equation}
S := \left\{x \in D[0,1] \colon \operatorname{Disc}(x) \subseteq h \mathbb{Q} \right\}, 
\quad R := \left\{x \in D[0,T] \colon \operatorname{Disc}(x) \subseteq \frac{h}{T}\mathbb{Q}\right\}.
\end{equation}
In words, $S$ is the set of \cadlag paths on $[0,1]$ that can only jump at times that are rational multiples of $h$.
Note that if we endow the sets $S$ and $R$ with the uniform metric, they are separable and hence Polish spaces.  
By Theorem 1.16 in \cite{csorgo1993weighted}\footnote{Note that in \cite{csorgo1993weighted}, the result is stated to hold for all $h>0$, but the proof reveals that it only holds for sufficiently small $h>0$}, we have $\rho_P^S (B^{h'}, W) \leq C \sqrt{h'}\log(1/h')$ for sufficiently small $h' >0$. Define $f: S \rightarrow R$ by $f(x) = \sqrt{T} x_{\cdot/T}$, then $f$ is $\sqrt{T}$-Lipschitz. Theorem $\ref{thm:lipschitzmapping}$ and the self-similarity of the Donsker approximation and Brownian motion give
\begin{align*}
\rho_P^R (B^h,W) &= \rho_P^R (\sqrt{T} B_{\cdot/T}^{h/T},\sqrt{T} W_{\cdot/T}) = \rho_P^R(f(B^{h/T}),f(W)) \\
&\leq C\frac{(1 \vee \sqrt{T})}{\sqrt{T}}\sqrt{h}\log(T/h),
\end{align*}
for $h>0$ sufficiently small, which concludes Step 1. \\

Step 2: Fix $\ell \in M$ and define the map $g_t : \mathbb{R} \times R \rightarrow \mathbb{R}$ by $g_t(x_0,x) := \inf_{0 \leq s \leq t} x_0 + x_s - \alpha \ell_s$. 
Note that $g(x_0,\cdot)$ is 1-Lipschitz for every $x_0 \in \mathbb{R}$ by Lemma \ref{lem:infestimate}. Denote the cdf of $g(X_{0-},B)$ by $F$. We check that $F$ is Lipschitz continuous: for $x,y \in \mathbb{R}$, setting $Y^1:= B - \alpha \ell - x$ and $Y^2:= B - \alpha \ell - y$ in Lemma \ref{lem::X0bounds}, we obtain

\begin{equation}
\label{eq:Lipschitz}
|F(x) - F(y)| \leq \|V_{0-}\|_{\infty} |x-y|.
\end{equation}
By Corollary \ref{cor:kolmogorovlipschitzmapping}, we therefore obtain for $t \in [0,T]$ and $h >0$ small enough
\begin{align*}
 |\mathbb{P}(g_t (X_{0-},B^h) \leq 0) - \mathbb{P}(g_t (X_{0-},B) \leq 0)| 
\leq C (1+\|V_{0-}\|_{\infty})\frac{(1 \vee \sqrt{T})}{\sqrt{T}}\sqrt{h}\log(T/h).
\end{align*}
Since with $X_{0-}^{h} := \left\lfloor\frac{X_{0-}}{\sqrt{h}} \right \rfloor \sqrt{h}$
it holds that  ${|X_{0-} - X_{0-}^h| \leq \sqrt{h}}$ almost surely, setting $Y^1 := -\sqrt{h} + B^h - \alpha \ell$ and $Y^2 := B^h - \alpha \ell$ in Lemma \ref{lem::X0bounds}, we find
\begin{align*}
 &\mathbb{P}(g_t (X_{0-}^{h},B^h) \leq 0) - \mathbb{P}(g_t (X_{0-},B^h) \leq 0) \\
&\leq \mathbb{P}(g_t (X_{0-}-\sqrt{h},B^h) \leq 0) - \mathbb{P}(g_t (X_{0-},B^h) \leq 0) \\
&\leq \|V_{0-}\|_{\infty} \sqrt{h},
\end{align*}
and arguing analogously for the downwards estimate we find
\begin{align}\label{eq:gthestimate}
 |\mathbb{P}(g_t (X_{0-}^{h},B^h) \leq 0) - \mathbb{P}(g_t (X_{0-},B^h) \leq 0)| \leq \|V_{0-}\|_{\infty} \sqrt{h}.
\end{align}
Noting that $\Gamma_h[\ell]_t = \mathbb{P}(g_t (X_{0-}^{h},B^h) \leq 0)$ and $\Gamma[\ell]_t = \mathbb{P}(g_t (X_{0-},B) \leq 0)$, taking the supremum over $t \in [0,T]$ and absorbing some terms into the constant $C$, the claim follows.
\end{proof}

\begin{lemma}\label{lem:asymptoticallydominated}
Let $\Ldonskermin^h$ be the minimal solution to \eqref{eq:mckeandonskerproblem} with initial condition given by $\eqref{eq::init}$ and suppose that Assumptions \ref{as:Donsker} are satisfied. Then, for any $t > 0$, %it holds that 
\begin{equation}
\limsup_{h \to 0} \Ldonskermin_t^h \leq \Lmin_t.
\end{equation}
\end{lemma}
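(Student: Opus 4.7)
The plan is to construct a super-solution of the fixed-point equation defining $\alpha^{-1}\Ldonskermin^h$ that approximates $\alpha^{-1}\Lmin$ from above as $h \to 0$. Fix $T > 0$; denote by $\tilde{\Gamma}_h$ the Donsker operator with the perturbed initial condition \eqref{eq::init}, and by $\Gamma_h$ the operator of Lemma \ref{lem:Gammahconvergencerate}, which uses the unperturbed $\lfloor X_{0-}/\sqrt{h} \rfloor \sqrt{h}$. By the same argument as in Lemma \ref{lem:Gammahfixedpoint}, $\alpha^{-1}\Ldonskermin^h = \lim_{k\to\infty}\tilde{\Gamma}_h^{(k)}[0]$ as a monotone increasing limit.

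The key technical observation is the shift identity $\tilde{\Gamma}_h[\ell] = \Gamma_h[\ell - \epsilon_h/\alpha]$, with $\epsilon_h := \lfloor \log(h)^2 \rfloor \sqrt{h}$, obtained by absorbing the additive perturbation of the initial condition into the drift $\alpha\ell$. The point of this identity is that $\epsilon_h/\alpha$ eventually dominates the Donsker rate $\sqrt{h}\log(T/h)$ of Lemma \ref{lem:Gammahconvergencerate}, since $\log(h)^2/\log(T/h) \to \infty$ as $h \to 0$; this is precisely why the perturbation in \eqref{eq::init} was chosen of order $\sqrt{h}\log(h)^2$.

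Let $C$ be the constant from Lemma \ref{lem:Gammahconvergencerate} for the interval $[0,T]$, set $\delta_h := C\sqrt{h}\log(T/h)$, and define the candidate super-solution $\ell^*_t := \alpha^{-1}\Lmin_t + \delta_h$. For $h$ small enough that $\delta_h \leq \epsilon_h/\alpha$, one has $\ell^* - \epsilon_h/\alpha \leq \alpha^{-1}\Lmin$ pointwise, and the monotonicity of $\Gamma_h$ in its argument, Lemma \ref{lem:Gammahconvergencerate} applied to $\alpha^{-1}\Lmin$, and the fixed-point identity $\Gamma[\alpha^{-1}\Lmin] = \alpha^{-1}\Lmin$ combine to give
\begin{equation*}
\tilde{\Gamma}_h[\ell^*]_t = \Gamma_h[\ell^* - \epsilon_h/\alpha]_t \leq \Gamma_h[\alpha^{-1}\Lmin]_t \leq \alpha^{-1}\Lmin_t + \delta_h = \ell^*_t
\end{equation*}
for every $t \in [0,T]$, so $\ell^*$ is a pointwise super-solution for $\tilde{\Gamma}_h$.

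A straightforward induction using monotonicity of $\tilde{\Gamma}_h$ then yields $\tilde{\Gamma}_h^{(k)}[\ell^*] \leq \ell^*$ for all $k$, and since $0 \leq \ell^*$, monotonicity also gives $\tilde{\Gamma}_h^{(k)}[0] \leq \tilde{\Gamma}_h^{(k)}[\ell^*] \leq \ell^*$. Passing $k \to \infty$, we obtain $\Ldonskermin^h_t \leq \Lmin_t + \alpha \delta_h$ on $[0,T]$; letting $h \to 0$ and exploiting that $T$ was arbitrary yields the claim. The main obstacle is the simultaneous calibration of $\ell^*$: the vertical shift $\delta_h$ must be large enough to absorb the Donsker error from Lemma \ref{lem:Gammahconvergencerate}, yet small enough relative to $\epsilon_h/\alpha$ that $\ell^*-\epsilon_h/\alpha$ remains below $\alpha^{-1}\Lmin$, and it is exactly this double requirement that forces the initial perturbation to be strictly larger than $\sqrt{h}\log(1/h)$.
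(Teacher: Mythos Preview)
Your argument is correct and in fact somewhat cleaner than the paper's own proof. The paper proceeds by introducing an auxiliary family of optimisation problems
\[
V_t^h=\inf_{\ell\in M}\Big(\alpha\ell_t+\frac{\alpha}{\sqrt{h}\lfloor\log(h)^2\rfloor}\sup_{s\le t}\big|\Gamma_h[\ell]_s-\ell_s\big|\Big),
\]
and then shows, in two steps, that $\limsup_h V_t^h\le\Lmin_t$ (by plugging in $\alpha^{-1}\Lmin$ and invoking Lemma~\ref{lem:Gammahconvergencerate}) and that $\Ldonskermin_t^h\le V_t^h+o(1)$ (by taking near-minimisers and iterating the same shift identity $\Gamma_h[\ell-d_h]=\tilde\Gamma_h[\ell]$ that you isolate explicitly). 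This variational detour follows the template of \cite{cuchiero2020propagation} and is designed to be robust to more general perturbations.

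Your route bypasses the optimisation entirely: you recognise that $\ell^*=\alpha^{-1}\Lmin+\delta_h$ is a pre-fixed point of $\tilde\Gamma_h$ on $[0,T]$ once $\delta_h\le\epsilon_h/\alpha$, and then the monotone iteration from $0$ is trapped below $\ell^*$. The essential ingredients (the shift identity, Lemma~\ref{lem:Gammahconvergencerate}, monotonicity of $\Gamma_h$) are the same; what you gain is a one-line quantitative bound $\Ldonskermin^h_t\le\Lmin_t+\alpha C\sqrt{h}\log(T/h)$ and a more transparent explanation of why the perturbation size must dominate the Donsker rate. What the paper's optimisation argument buys is a framework that decouples ``$\Lmin$ is nearly a $\Gamma_h$-solution'' from ``any near-solution dominates the minimal one'', which is useful when the comparison between the two operators is less explicit than a rigid shift.
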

\begin{proof}
Define the sequence of optimal values
\begin{align}
V_t^h = \inf_{\ell \in M} c_h(\ell), \quad  c_h(\ell) = \alpha \ell_t + \frac{\alpha}{ \sqrt{h}\lfloor\log(h)^2\rfloor} \sup_{s \in [0,t]} |\Gamma_h[\ell]_s - \ell_s|,
\end{align}
where $\Gamma_h$ is defined as in Lemma \ref{lem:Gammahfixedpoint}.  \\

Step 1: We show that $V_t^h$ is asymptotically dominated by $\Lmin$ as $h \to 0$. By Lemma \ref{lem:Gammahconvergencerate}, for small enough $h>0$ we have
\begin{align}
V_t^h \leq c_h(\alpha^{-1}\Lmin) = \Lmin_t + \frac{\alpha}{\sqrt{h}\lfloor\log(h)^2\rfloor}  \sup_{s \in [0,t]} | \Gamma_h[\Lmin]_s - \Lmin_s| \leq \Lmin_t + \alpha C \frac{\log(t/h)}{\lfloor\log(h)^2\rfloor}
\end{align}
and therefore $\limsup_{h\to 0} V_t^h \leq \Lmin_t$. \\

Step 2: We show that $V_t^h$ asymptotically dominates $\Ldonskermin_t^h$. Choose a sequence $(L^h)_{h \geq 0}$ with $L^h \in M$ such that $c_h(L^h) - V_t^h \to 0$ as $h \to 0$. Suppose that $\sup_{s \in [0,t]} |\Gamma_h[L^h]_s - L_s^h| > \alpha^{-1} h^{1/2} \lfloor \log(h)^2 \rfloor$ for some $h$, then we have $c_h(L^h) > c_h(\alpha^{-1}\tilde \L^h)$, where $\tilde \L^h$ is a solution to \eqref{eq:mckeandonskerproblem} with $X_{0-}^{h} := \left\lfloor\frac{X_{0-}}{\sqrt{h}} \right \rfloor \sqrt{h}$. We may therefore assume wlog that $\sup_{s \in [0,t]} |\Gamma_h[L^h]_s - L_s^h| \leq \alpha^{-1} h^{1/2} \lfloor \log(h)^2 \rfloor$ for all $h > 0$. For the sake of brevity, we write $d_h : = \alpha^{-1} h^{1/2} \lfloor \log(h)^2 \rfloor$ in the following. We then have
\begin{equation}\label{eq:lowerboundinequality}
L_s^h \geq \Gamma_h[L^h]_s - d_h \quad s \in [0,t].
\end{equation}
For $\ell \in M$ define the operator 
\begin{align}\label{eq:Gammatilde}
\tilde{\Gamma}_h[\ell]_t := \mathbb{P}\left(\inf_{0\leq s \leq t} \left[ \left(\left\lfloor \frac{X_{0-}}{\sqrt{h}} \right\rfloor + \left\lfloor\log(h)^2 \right\rfloor\right)\sqrt{h} +
  B_s^h - \alpha \ell_s \right] \leq 0\right).
\end{align}
 Inequality \eqref{eq:lowerboundinequality} and the monotonicity of $\Gamma_h$ imply
\begin{equation}
L_s^h \geq \Gamma_h [-d_h]_s -d_h = \tilde \Gamma_h[0]_s - d_h, \quad s \in [0,t].
\end{equation}
Substituting this again into inequality \eqref{eq:lowerboundinequality} yields
\begin{align*}
L_s^h \geq \Gamma_h[\tilde \Gamma_h [0] - d_h]_s - d_h = \tilde\Gamma_h^{(2)}[0]_s - d_h, \quad s \in [0,t].
\end{align*}
Proceeding inductively, we obtain $L_s^h \geq \tilde\Gamma_h^{(k)}[0]_s - d_h$ for every $s \in [0,t]$ and $k \in \N$, and taking the limit as $k \to \infty$, by Lemma \ref{lem:Gammahfixedpoint} we obtain $\alpha L_s^h \geq \Ldonskermin_s^h - \alpha d_h$ for $s \in [0,t].$ Since $d_h \to 0$ as $h \to 0$, combined with the definition of $c_h$ and Step 1, this yields
\begin{equation*}
\limsup_{h\to 0} \Ldonskermin_t^h \leq \limsup_{h\to 0} \alpha L_t^h \leq \limsup_{h\to 0} c_h(L^h) = \limsup_{h\to 0} V_t^h \leq \Lmin_t.
\end{equation*}
\end{proof}

\begin{theorem}\label{th:maindonsker}
Suppose that Assumptions \ref{as:Donsker} are satisfied. Then, the sequence of (perturbed) minimal solutions to the Donsker problem converges to the minimal solution to the McKean--Vlasov problem, i.e., it holds that
\begin{equation}
\lim_{h\to 0} \underline{\L}_t^h = \Lmin_t,
\end{equation} 
for every $t \geq 0$ that is a continuity point of $\underline{\Lambda}$.
\end{theorem}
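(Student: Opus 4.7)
The plan is to couple the already-established upper bound from Lemma \ref{lem:asymptoticallydominated} with a complementary liminf bound obtained by identifying every subsequential limit of $\Ldonskermin^h$ in the compact space $M$ as a genuine solution of the McKean--Vlasov problem \eqref{eq:mckeanproblem}, and then invoking the minimality of $\Lmin$. Since Lemma \ref{lem:asymptoticallydominated} gives $\limsup_{h\to 0}\Ldonskermin_t^h \leq \Lmin_t$ for every $t>0$, everything reduces to proving $\liminf_{h\to 0}\Ldonskermin_t^h \geq \Lmin_t$ at continuity points of $\Lmin$.

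\emph{Step 1: subsequential limits solve the McKean--Vlasov problem.} Pick $h_n\to 0$ with $\Ldonskermin^{h_n}\to \L^*$ in $M$ (possible by compactness). By construction, $\Ldonskermin^{h_n} = \alpha\tilde{\Gamma}_{h_n}[\alpha^{-1}\Ldonskermin^{h_n}]$ with $\tilde{\Gamma}_h$ as in \eqref{eq:Gammatilde}. I then decompose
\[
\tilde{\Gamma}_{h_n}[\alpha^{-1}\Ldonskermin^{h_n}]_t - \Gamma[\alpha^{-1}\L^*]_t
= \bigl(\tilde{\Gamma}_{h_n}[\alpha^{-1}\Ldonskermin^{h_n}]_t - \Gamma[\alpha^{-1}\Ldonskermin^{h_n}]_t\bigr)
+ \bigl(\Gamma[\alpha^{-1}\Ldonskermin^{h_n}]_t - \Gamma[\alpha^{-1}\L^*]_t\bigr).
\]
The first bracket tends to zero uniformly in $t$: Lemma \ref{lem:Gammahconvergencerate} contributes the $\mathcal{O}(\sqrt{h}\log(T/h))$ rate between $\Gamma_h$ and $\Gamma$, and Lemma \ref{lem::X0bounds} applied to the deterministic $\lfloor\log(h)^2\rfloor\sqrt{h}$-shift distinguishing $\tilde{\Gamma}_h$ from $\Gamma_h$ (using the boundedness of $V_{0-}$ and the independence of $B^h$ from $X_{0-}$) costs only an extra $\mathcal{O}(\sqrt{h}\log(h)^2)$. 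The second bracket vanishes at every continuity point of $\Gamma[\alpha^{-1}\L^*]$ by Proposition \ref{thm:gammacont}. At any common continuity point $t$ of $\L^*$ and of $\Gamma[\alpha^{-1}\L^*]$ -- a co-countable, hence dense, subset of $[0,\infty)$ -- we therefore obtain $\L^*_t = \alpha\Gamma[\alpha^{-1}\L^*]_t$; right-continuity of both sides in $M$ extends the identity to all $t\geq 0$, so $\L^*$ solves \eqref{eq:mckeanproblem}.

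\emph{Step 2: from subsequential domination to the pointwise liminf bound at continuity points of $\Lmin$.} Minimality gives $\L^*_s \geq \Lmin_s$ for every $s\geq 0$, and passing to left-limits yields $\L^*_{t-} = \sup_{s<t}\L^*_s \geq \sup_{s<t}\Lmin_s = \Lmin_{t-}$. Fix a continuity point $t$ of $\Lmin$ and pick continuity points $s_k\uparrow t$ of $\L^*$ (which are dense). Lemma \ref{lem:Mconvergence}$(c)$ gives $\Ldonskermin^{h_n}_{s_k}\to \L^*_{s_k}$ as $n\to\infty$, and since each $\Ldonskermin^{h_n}$ is increasing,
\[
\liminf_{n\to\infty}\Ldonskermin^{h_n}_t \geq \lim_{n\to\infty}\Ldonskermin^{h_n}_{s_k} = \L^*_{s_k}.
\]
Letting $k\to\infty$ produces $\liminf_{n}\Ldonskermin^{h_n}_t \geq \L^*_{t-}\geq \Lmin_{t-} = \Lmin_t$. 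Since this bound holds along every convergent subsequence, it upgrades to $\liminf_{h\to 0}\Ldonskermin^h_t \geq \Lmin_t$, and combining with Lemma \ref{lem:asymptoticallydominated} completes the proof.

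The main obstacle is Step 1, namely establishing that $\tilde{\Gamma}_{h_n}[\alpha^{-1}\Ldonskermin^{h_n}] \to \Gamma[\alpha^{-1}\L^*]$ in a sense strong enough to pass to the limit in the implicit fixed-point equation, despite (i) the time discretization of the driver, (ii) the $\lfloor\log(h)^2\rfloor\sqrt{h}$ perturbation of the initial condition, and (iii) the fact that Proposition \ref{thm:gammacont} only delivers $M$-continuity of $\Gamma$, not uniform-in-$\ell$ continuity. Separating the error into the uniform $\tilde{\Gamma}_h - \Gamma$ part (which is uniform in $\ell$) and the $\Gamma$-continuity part (which is only pointwise at continuity points of the limit) is exactly what makes this decomposition work.
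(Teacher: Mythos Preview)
Your proof is correct and follows essentially the same approach as the paper: both pass to a subsequential $M$-limit $\L^*$ by compactness, identify $\L^*$ as a solution of \eqref{eq:mckeanproblem} via the same three-term decomposition (Lemma~\ref{lem:Gammahconvergencerate} for $\Gamma_h-\Gamma$, Lemma~\ref{lem::X0bounds} for the $\lfloor\log(h)^2\rfloor\sqrt{h}$-perturbation, Proposition~\ref{thm:gammacont} for $\Gamma$-continuity), invoke minimality to get $\L^*\geq\Lmin$, and combine with Lemma~\ref{lem:asymptoticallydominated}. The only cosmetic difference is that the paper concludes $\L^*=\Lmin$ directly (using $\L^*_t\leq\Lmin_t$ at continuity points of $\L^*$ and right-continuity) and then reads off $M$-convergence, whereas your Step~2 establishes the liminf bound at continuity points of $\Lmin$ by a slightly more hands-on $s_k\uparrow t$ argument; both routes are valid and equivalent.
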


\begin{proof} Choose a sequence $(h_n)_{n\geq 1}$ such that $h_n > 0$ and $\lim_{n\to\infty} h_n = 0$. By compactness of $M$, after passing to 
a subsequence if necessary, we may assume that $\Ldonskermin^{h_n} \to \Lambda$ for some $\Lambda$ with $\alpha^{-1}\Lambda \in M$. If $x$ is a c\`adl\`ag 
path, we introduce the notation 
$\GammaAl[x] := \alpha\Gamma[\alpha^{-1}x]$, such that $\L$ solves \eqref{eq:mckeanproblem} iff $\GammaAl[\L]=\L$. We use the notation $\GammaAl_h$ in the analogous way. We show that $\Lambda$ solves the McKean-Vlasov problem.
By Theorem \ref{thm:gammacont}, $\Gamma$ is continuous, so for $t \notin \operatorname{Disc}(\L) \cup \operatorname{Disc}(\Gamma[\L])$ we have
\begin{align*}
|\GammaAl[\L]_t - \L_t| &= \lim_{n\to\infty} |\GammaAl[\Ldonskermin^{h_n}]_t - \Ldonskermin_t^{h_n}| \\
 &\leq \limsup_{n\to\infty}\left[ |\GammaAl[\Ldonskermin^{h_n}]_t - \GammaAl_{h_n}[\Ldonskermin^{h_n}]_t| + |\GammaAl_{h_n}[\Ldonskermin^{h_n}]_t - \GammaAlpert_{h_n}[\Ldonskermin^{h_n}]_t|  \right].
\end{align*}
where $\GammaAlpert[x]$ is defined as $\alpha\tilde{\Gamma}_{h_n}[\alpha^{-1}x]$ with $\tilde{\Gamma}_{h}$ defined in \eqref{eq:Gammatilde} and we used that $\Ldonskermin^{h_n} = \GammaAlpert[\Ldonskermin^{h_n}]$  since $\Ldonskermin^{h_n}$ is a solution to the perturbed Donsker problem. Due to Lemma \ref{lem:Gammahconvergencerate}, the first term above vanishes as $n \to \infty$. For the second term, setting $Y^1:= \sqrt{h_n}(1+\lfloor \log(h_n)^2 \rfloor) + B^h - \Ldonskermin^{h_n}$ and $Y^2:= -\sqrt{h_n}+ B^h - \Ldonskermin^{h_n}$, we have
\begin{align*}
\alpha\mathbb{P}\left(X_{0-} + \inf_{s \leq t} Y_s^1 \leq 0\right) \leq \GammaAlpert_{h_n}[\Ldonskermin^{h_n}]_t \leq \GammaAl_{h_n}[\Ldonskermin^{h_n}]_t \leq 
\alpha\mathbb{P}\left(X_{0-} + \inf_{s \leq t} Y_s^2 \leq 0\right),
\end{align*}
so by Lemma \ref{lem::X0bounds} it follows that
\begin{align*}
 |\GammaAl_{h_n}[\Ldonskermin^{h_n}]_t - \GammaAlpert_{h_n}[\Ldonskermin^{h_n}]_t| \leq \alpha\|V_{0-}\|_{\infty} \sqrt{h_n}(2+\lfloor \log(h_n)^2 \rfloor). 
 \end{align*} Therefore, $\GammaAl[\L]_t = \L_t$. By right-continuity of $\L$ and $\GammaAl[\L]$, we find $\L = \GammaAl[\L]$. By definition, this implies
$\L \geq \Lmin$. On the other hand, for $t \notin \operatorname{Disc}(\L)$, Lemma \ref{lem:asymptoticallydominated} yields
\begin{align*}
\L_t = \limsup_{n\to\infty} \underline{\L}_t^{h_n} \leq \Lmin_t.
\end{align*}
By right-continuity, we find $\L \leq \Lmin$ and hence $\L = \Lmin$.  \end{proof}

In the case $\alpha \|V_{0-}\|_{\infty} <1$, we also get a rate of convergence as shown in the next theorem. By \cite[Theorem 2.2 and the comment below]{ledger2020uniqueness}, this corresponds to the weak feedback regime, where uniqueness and in particular continuity of solutions holds true.

\begin{theorem}
Suppose that Assumptions \ref{as:Donsker} are satisfied and that additionally $\alpha \|V_{0-}\|_{\infty} <1$.
Then, there is a constant $C$ depending on $T, \alpha$ and $\|V_{0-}\|_{\infty}$ such that
\[
 \sup_{t \in [0,T]}|\Lmin_t -\underline{\L}^h_t| \leq C \log(h)^2\sqrt{h},
\]
for $h>0$ sufficiently small.
\end{theorem}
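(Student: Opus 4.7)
The plan is to exploit the contraction-type structure that emerges from the weak feedback condition $\alpha\|V_{0-}\|_{\infty}<1$. Writing $\GammaAl[\cdot]=\alpha\Gamma[\alpha^{-1}\cdot]$ and $\GammaAlpert[\cdot]=\alpha\tilde{\Gamma}_h[\alpha^{-1}\cdot]$ as in the proof of Theorem~\ref{th:maindonsker}, and denoting analogously $\GammaAl_h := \alpha\Gamma_h[\alpha^{-1}\cdot]$, one has the fixed-point identities $\Lmin = \GammaAl[\Lmin]$ and $\underline{\L}^h = \GammaAlpert[\underline{\L}^h]$. I would start from the triangle decomposition
\begin{equation*}
|\Lmin_t - \underline{\L}^h_t| \leq |\GammaAl[\Lmin]_t - \GammaAl[\underline{\L}^h]_t| + |\GammaAl[\underline{\L}^h]_t - \GammaAl_h[\underline{\L}^h]_t| + |\GammaAl_h[\underline{\L}^h]_t - \GammaAlpert[\underline{\L}^h]_t|
\end{equation*}
and control each of the three terms separately, before combining and closing the argument via a Grönwall-style absorption.

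The middle term is bounded by $\alpha C\sqrt{h}\log(T/h)$ uniformly in $t\in[0,T]$ by direct application of Lemma~\ref{lem:Gammahconvergencerate}. The third term, which quantifies the effect of the $\lfloor\log(h)^2\rfloor\sqrt{h}$-shift in the initial condition relative to the unperturbed discretization, is handled exactly as in the closing step of the proof of Theorem~\ref{th:maindonsker}: Lemma~\ref{lem::X0bounds} (comparing $Y^1$ and $Y^2$ differing by the deterministic shift $\sqrt{h}(1+\lfloor\log(h)^2\rfloor)$) yields a bound of order $\alpha\|V_{0-}\|_{\infty}\sqrt{h}(2+\lfloor\log(h)^2\rfloor)$, which is the dominant $\log(h)^2\sqrt{h}$ contribution.

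The first term is the crux. Applying Lemma~\ref{lem::X0bounds} with $Y^1 := B_\cdot - \Lmin_\cdot$ and $Y^2 := B_\cdot - \underline{\L}^h_\cdot$, both being deterministic modifications of the same Brownian motion and therefore independent of $X_{0-}$, one finds
\begin{equation*}
|\GammaAl[\Lmin]_t - \GammaAl[\underline{\L}^h]_t| \leq \alpha\|V_{0-}\|_{\infty}\sup_{s\leq t}|\Lmin_s - \underline{\L}^h_s|.
\end{equation*}
Collecting the three bounds into a single inequality valid uniformly on $[0,T]$ and taking $\sup_{t\leq T}$ on both sides gives
\begin{equation*}
\sup_{t\leq T}|\Lmin_t - \underline{\L}^h_t| \leq \alpha\|V_{0-}\|_{\infty}\sup_{t\leq T}|\Lmin_t - \underline{\L}^h_t| + C'\log(h)^2\sqrt{h},
\end{equation*}
after which $\alpha\|V_{0-}\|_{\infty}<1$ allows the first term on the right to be absorbed into the left-hand side, producing the claimed rate with constant $C'/(1-\alpha\|V_{0-}\|_{\infty})$.

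The main obstacle I anticipate is ensuring that the absorption step is legitimate, i.e., that $\sup_{t\leq T}|\Lmin_t - \underline{\L}^h_t|$ is a priori finite. This is immediate since both quantities take values in $[0,\alpha]$. Additionally, under $\alpha\|V_{0-}\|_{\infty}<1$ the cited result \cite[Theorem~2.2]{ledger2020uniqueness} guarantees continuity of $\Lmin$, so that no delicate treatment of jump discontinuities is needed and the contractive mechanism is genuinely global rather than only local in time. All other estimates are ready-made from the preceding results, so no further technical difficulties are expected.
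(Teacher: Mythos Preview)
Your proposal is correct and follows essentially the same approach as the paper's proof: the identical three-term triangle decomposition, with the first term handled via Lemma~\ref{lem::X0bounds} to get the contraction factor $\alpha\|V_{0-}\|_{\infty}$, the middle term via Lemma~\ref{lem:Gammahconvergencerate}, the third term as in the proof of Theorem~\ref{th:maindonsker}, and then absorption using $\alpha\|V_{0-}\|_{\infty}<1$. Your additional remarks on the a priori finiteness of the supremum and the continuity of $\Lmin$ are harmless clarifications not present in the paper's argument.
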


\begin{proof}
Applying Lemma \ref{lem::X0bounds} with $Y^1 = B - \alpha \ell^1$ and $Y^2 = B - \alpha \ell^2$ shows
\begin{align}\label{eq:GammaLipschitz}
\sup_{t \in [0,T]} | \Gamma[\ell^1]_t - \Gamma[\ell^2]_t| \leq \alpha \|V_{0-}\|_{\infty} \sup_{t \in [0,T]} |\ell_t^1 - \ell_t^2|.
\end{align} 
Recalling the notation of the proof of Theorem \ref{th:maindonsker}, we have $\Ldonskermin^{h_n} = \GammaAlpert_{h_n}[\Ldonskermin^{h_n}]$ , since $\Ldonskermin^{h_n}$ is a solution to the perturbed Donsker problem and $\GammaAl [\Lmin]= \Lmin$. Moreover, 
\begin{align*}
 \sup_{t \in [0,T]}|\Lmin_t -\underline{\L}^h_t|&= \sup_{t \in [0,T]}|\GammaAl[\Lmin]_t -\GammaAlpert_{h_n}[\underline{\L}^h]_t|\\
 & \leq \sup_{t \in [0,T]}|\GammaAl[\Lmin]_t -\GammaAl[\underline{\L}^h]_t|+ \sup_{t \in [0,T]}|\GammaAl[\Lmin^h]_t -\GammaAl_{h}[\underline{\L}^h]_t| \\
 &\quad + \sup_{t \in [0,T]}|\GammaAl_{h}[\Lmin^h]_t -\GammaAlpert_{h}[\underline{\L}^h]_t|\\
 &\leq \alpha\|V_{0-}\| \sup_{t \in [0,T]}|\Lmin_t -\underline{\L}^h_t| + \alpha C \sqrt{h} \log(T/h) \\
 &\quad+ \alpha\| V_{0-}\|_{\infty} \sqrt{h} (2 + \lfloor \log(h)^2 \rfloor),
\end{align*}
where the last estimate follows from \eqref{eq:GammaLipschitz}, Lemma \ref{lem:Gammahconvergencerate} and the third term is estimated as in the proof of Theorem \ref{th:maindonsker}. Rearranging terms and absorbing some into the constant $C$, the claim follows.
\end{proof}

\begin{remark}
Note that the choice $\sqrt{h}\lfloor \log(h)^2 \rfloor$ for the perturbation of the initial condition was somewhat arbitrary, since the proofs of the previous theorems work for any function $\delta(h)$ that converges to zero slowly enough such that 
\begin{align*}\lim_{h \to 0} \frac{\sqrt{h}\log(T/h)}{\delta(h)} = 0
\end{align*} for every $T > 0$. In particular, we could consider $\sqrt{h} \lfloor \epsilon\log(h)^2 \rfloor$ for any $\epsilon > 0$, which suggests that the perturbation can be ignored in practice, and we will indeed ignore it in the numerical tests in the next section. It seems natural to conjecture that the result should also hold with $\epsilon = 0$, and this would indeed follow from Conjecture 6.10 in \cite{cuchiero2020propagation}, however we do not have a proof at this point.
\end{remark}

\section{Numerical tests}\label{sec:numerics}

In this section, we first describe our implementation of the Donsker scheme from Section \ref{sec:donsker} and then present results
from numerical tests. We also compare them with the implicit and explicit time-stepping scheme.

\subsection{A tree-type scheme for the Donsker approximation}

We consider
\eqref{eq:mckeandonskerproblem}
and specify  $Y_k$  to be Rademacher random variables, i.e.\ $\mathbb{P}(Y_k=-1) = \mathbb{P}(Y_k=1) = 1/2$.
We introduce a time mesh $t_k = k h$, $k\ge 0$ an integer, and
\begin{eqnarray*}
u_i^k &=& \mathbb{P}(X^h_{t_k} = i \sqrt{h} - \Ldonskermin^h_{t_k}, \tau^h>t_k),
\end{eqnarray*}
for $i \in \mathbb{Z}$, using the notation defined in Section \ref{sec:donsker}, from which it follows
\begin{eqnarray*}
\Ldonskermin^h_{t_k} = \alpha\left(1 - \sum_{i=i_k+1}^\infty u_i^k\right), \qquad i_k = \bigg\lfloor  \frac{ \Ldonskermin^h_{t_k}}{\sqrt{h}} \bigg\rfloor.
\end{eqnarray*}

We have that
\begin{eqnarray}
\label{impl_rec}
u_i^k &=& \left\{
\begin{array}{rl}
\frac{1}{2} u_{i-1}^{k-1} + \frac{1}{2} u_{i+1}^{k-1}, & \qquad i > i_k + 1, \\
\frac{1}{2} u_{i+1}^{k-1}, & \qquad i = i_k + 1, \\
0, & \qquad i < i_k + 1,
\end{array}
\right.
\end{eqnarray}
for $k>0$. This is an implicit scheme as $u^k$ and $\Ldonskermin^h$ are implicitly coupled. The recurrence relation \eqref{impl_rec} has the resemblance of a binomial tree, shifted by the interaction term, and
%Introducing $\Delta t = h$,  $= \Delta x = \sqrt{h}$, \eqref{impl_rec} 
can be rearranged into the finite difference scheme \eqref{impl_rec_intro} for \eqref{u_PDE}. It
can also be interpreted as a special type of  semi-Lagrangian scheme (see \cite{camilli1995approximation, falcone2013semi}) for the forward equation \eqref{p_PDE}.

\subsection{Iterative solution}
\label{sec:iterative}

Note that \eqref{impl_rec} is a non-linear equation through the dependence of $i_k$ on $u^k$ via $\Ldonskermin^h_{t_k}$.
Lemma \ref{lem:Gammahfixedpoint} suggests a fixed-point iteration to solve simultaneously for $\Ldonskermin^h_{t_k}$ and the vector $u^k$
for each $k$. We assume that we know the cdf of $X_{0-}$ exactly, and therefore we can calculate, %setting $\Ldonskermin_{0_-}^{h}=0$, 
 $$\mathbb{P}(X^h_{0-} = i\sqrt{h}) = \mathbb{P}(X_{0-} \in [i\sqrt{h},(i+1)\sqrt{h})),$$ 
for all $i \in \mathbb{Z}$. To calculate $u^{0}_i$ for $i \in \mathbb{Z}$, we need to determine $\Ldonskermin_{0}^{h}$ first, which we obtain as in Lemma \ref{lem:Gammahfixedpoint}
through the iteration initialized with $\lambda^{0} = 0$ and
\begin{align*} \lambda^{n+1} = \alpha\sum_{j=0}^{\iota^{n}} \mathbb{P}(X_{0-}^{h} = j\sqrt{h}), 
\quad \iota^{n} =  \bigg \lfloor \frac{\lambda^{n}}{\sqrt{h}} \bigg \rfloor.
\end{align*}
The iteration terminates when $\iota^{n+1} = \iota^{n}$, which happens after at most $\lfloor \alpha/\sqrt{h}\rfloor$ iterations, since $\lambda^{n} \leq \lambda^{n+1}$ and hence $\iota^{n} \leq \iota^{n+1}$ as well. This yields $\Ldonskermin_0^{h}$. We then calculate $u_i^0$ via $$u_i^{0} = 0 \text{ for } i \leq  \bigg \lfloor \frac{\Ldonskermin_0^{h}}{\sqrt{h}} \bigg \rfloor \quad  \text{and} \quad  u_i^{0} = \mathbb{P}(X_{0-}^{h} = i\sqrt{h})  \text{ for } i >  \bigg \lfloor \frac{\Ldonskermin_0^{h}}{\sqrt{h}} \bigg \rfloor. $$ The vectors $u^k = (u_i^k)_i$ are then calculated recursively through \eqref{impl_rec},
using a local in time version of the iteration in Lemma \ref{lem:Gammahfixedpoint} that iterates only over the scalar loss at each time point.
Set $\lambda^0 = \Ldonskermin^h_{t_{k-1}}$, $\iota^0 = \lfloor \lambda^0 / \sqrt{h} \rfloor$,
and for $n\ge 0$,
\begin{eqnarray}
\label{it_lam}
u_i^{k,n+1} &=& \left\{
\begin{array}{rl}
\frac{1}{2} u_{i-1}^{k-1} + \frac{1}{2} u_{i+1}^{k-1}, & \qquad i > \iota^{n} + 1, \\
\frac{1}{2} u_{i+1}^{k-1}, & \qquad i = \iota^n + 1, \\
0, & \qquad i < \iota^n + 1,
\end{array}
\right. \\
\nonumber
\lambda^{n+1} &=& \alpha\left(1 - \sum_{i=\iota^n+1}^\infty %\sum_{i=i_k^n+1}^\infty 
u_i^{k,n+1}\right), \qquad 
\iota^{n+1} =  \bigg \lfloor \frac{ \lambda^{n+1}}{\sqrt{h}} \bigg \rfloor.
\end{eqnarray}
Expressing this in terms of $\Gamma_h$, if we define
\begin{align}
\hat{\Lambda}^{(0)} := \begin{cases}
\Ldonskermin_t^{h}, \quad &t \in [0,t_{k-1}], \\
\Ldonskermin_{t_{k-1}}^h, \quad &t \in [t_{k-1},t_{k}] 
\end{cases}
\end{align}
and set $\hat{\Lambda}^{(n+1)} = \Gamma_h[\hat{\Lambda}^{(n)}]$, then $\lambda^{n+1}$ 
above is equal to $\Gamma_h[\hat{\Lambda}^{(n)}]_{t_k}$. We convince ourselves that this
computes the minimal solution: As in the case of the initial condition, the iteration is increasing in $n$ for the losses, $\lambda^{n+1} \ge \lambda^{n}$ and $\iota^{n+1} \ge \iota^{n}$, and the iteration terminates in at most $\lfloor \alpha / \sqrt{h} \rfloor$ iterations
(because $\iota^n$ can only take values in $\{i_{k-1},\ldots, \lfloor \alpha/ \sqrt{h} \rfloor\}$).
If $n_0$ is the smallest $n$ such that $\lambda^{n+1} = \lambda^{n}$, then $\hat{\Lambda}^{(n_0)}$
solves \eqref{eq:mckeandonskerproblem} on $[0,t_k]$, and therefore $\Lambda^{(n_0)}_t \geq \Ldonskermin_t^h$
for $t \in [0,t_k]$ by Remark \ref{rem:finitetimeminimal}. On the other hand, since $\Ldonskermin^h$ is increasing, it holds
that $\hat{\Lambda}^{(n_0)} \leq \Ldonskermin^{h}$, and by the monotonicity of $\Gamma_h$ and
a straightforward induction it follows that $\hat{\Lambda}^{(n)} \leq \Ldonskermin^{h}$
for $n \in \mathbb{N}$ and hence $\hat{\Lambda}_t^{(n_0)} = \Ldonskermin_t^{h}$ for $t \in [0,t_k]$.

\subsection{Explicit and implicit particle scheme}
\label{sec:particle}

Here, we briefly discuss the implementation of the time-stepping scheme specified by \eqref{eqdef:GammaDelta} and 
Definition \ref{def:discretized}.

The particle method in \cite{kaushansky2020convergence}, which we refer to here as explicit particle scheme,
is given  by, for a fixed number $n\ge 1$ of particles,
\begin{eqnarray}
\label{x_part}
X^{\Delta, n, (m)}_{k \Delta} &=& X_{0-} + Z_{k \Delta}^{n, \Delta, (m)} - \Lambda_{k \Delta}^{\Delta, n}, 
\qquad \quad k \ge 0, \; 1\le m \le n,
\\
\Lambda_{k\Delta}^{\Delta, n} &=& \frac{\alpha}{n} \sum_{m=1}^n
\mathbbm{1}_{\left\{\min_{0\le i<k} X^{\Delta, n, (m)}_{i \Delta} \le 0 \right\} }, \quad k > 0, \;\;\; \Lambda_{0}^{\Delta, n} = 0.
\label{l_part_exp}
\end{eqnarray}
We define a corresponding implicit particle approximation by \eqref{x_part} but with
$\Lambda_{k\Delta}^{\Delta, n} $ the smallest solution to
\begin{eqnarray}
%X^{\Delta, n, (m)}_{k \Delta} &=& X_{0-} + Z_{k \Delta}^{n, \Delta, (m)} - \Lambda_{k \Delta}^{\Delta, n}, 
%\qquad \quad k \ge 0, 1\le m \le n,
%\\
\Lambda_{k\Delta}^{\Delta, n} &=& \frac{\alpha}{n} \sum_{m=1}^n
\mathbbm{1}_{\left\{
\left(\min_{0\le i < k}  X^{\Delta, n, (m)}_{i \Delta} \right)
{ \text{\normalsize $\wedge$}}
\left( X_{0-} + Z_{k \Delta}^{n, \Delta, (m)} - \Lambda_{k \Delta}^{\Delta, n}\right)
 \le 0 \right\} }, \quad k\ge 0,
%\mathbbm{1}_{\left\{\min_{0\le i\le k} 
%X^{\Delta, n, (m)}_{i \Delta} 
% \le 0 \right\} }, \quad k \ge 0.
 \label{l_part_imp}
\end{eqnarray}
%with an appropriate modification at $k=0$ as described in Section \ref{sec:iterative} for
%the fully discrete scheme.
where we interpret the minimum over an empty set as $\infty$.
Similar to Section \ref{sec:iterative} for
the fully discrete scheme, $\Lambda^\Delta$ is implicitly defined and can be found iteratively:
\begin{eqnarray*}
\Lambda_{k\Delta}^{\Delta, n, (0)} \!\!&\!=&\! \frac{\alpha}{n} \sum_{m=1}^n
\mathbbm{1}_{\left\{\min_{0\le i<k} X^{\Delta, n, (m)}_{i \Delta} \le 0 \right\} }, \quad k \ge 0, \\
\Lambda_{k\Delta}^{\Delta, n, (j)} \!\!&\!=&\!
\frac{\alpha}{n} \sum_{m=1}^n
\mathbbm{1}_{\left\{
\left(\min_{0\le i < k}  X^{\Delta, n, (m)}_{i \Delta} \right)
{ \text{\normalsize $\wedge$}}
\left( X_{0-} + Z_{k \Delta}^{n, \Delta, (m)} - \Lambda_{k \Delta}^{\Delta, n, (j-1)}\right)
 \le 0 \right\} }, \quad k \ge 0, \; j > 0.
\end{eqnarray*}
It is clear that $\Lambda_{k\Delta}^{\Delta, n, (j)}$ is increasing in $j$, and that it terminates in finitely many iterations in a fixed-point, which has to be the minimal solution.

%
%\begin{align*}
%\Lambda_{k\Delta}^{\Delta} = \alpha \mathbb{P}\left(\min_{0 \leq i \leq k} \{X_{0-} + Z_{i\Delta} - \Lambda_{i\Delta}^{\Delta}\} \leq 0\right), 
%{}\end{align*}
%

\subsection{Numerical results}

In this section, we analyse computational  aspects and, especially, the numerical accuracy of the scheme.

As a first example, we consider a $\Gamma(k,\theta)$ distribution for
 $X_{0-}$ with $k=2$ and $\theta=1/3$. Note that the initial density $V_{0-}$ is globally Lipschitz in this case.
 For simplicity, we here set $u_i^0 = V_{0-}(i \sqrt{h})$.
 %$f(x)\le Cx^{1/2}$, $x\ge0$ for a suitable constant $C<\infty$. 
 Furthermore, we fix the time interval $[0,T]=[0,0.02]$. % and $\alpha = 1.3$.

We first examine iteration \eqref{it_lam} for the Donsker scheme. For two different values of $\alpha$ and an increasing number of time points $N$ such that $h=1/N$, Table \ref{table:iterations} gives the number of iterations before termination, first averaged over all time steps, and then the maximum number of iterations for any time step.

\begin{table}[ht!]
\centering
\begin{tabular}{|l|l|c|c|c|c|c|c|}
\hline
&time points $N$ & 100 & 200 & 400 & 800 & 1600 & 3200 \\ \hline \hline
$\alpha=0.5,$ &av.\ iter. & 1.0200  &  1.0150  &  1.0125  &  1.0088  &  1.0063  &  1.0047 \\
 & max.\ iter. &  2  &   2   &  2  &   2   &  2  &   2   \\  \hline
$\alpha=1.5,$ &av.\ iter.  &  1.2800  &  1.1800   & 1.1175   & 1.0775   & 1.0513 &   1.0341 \\
&max.\ iter.  & 16  &  18  &  20  &  20 &   22   & 23 \\  
\hline
\end{tabular}
\caption{Average and maximum number of iterations over all $N$ time points
for $\alpha=0.5$ (no jump) and $\alpha=1.5$ (jump).
}
\label{table:iterations}
\end{table}

In the regular case without a jump ($\alpha=0.5$), there are never more than 2 iterations needed, while the average number is close to 1. This is explained by 
the fact that
$i_k \neq i_{k-1}$ only if $\Ldonskermin^h_{t_{k}} - \Ldonskermin^h_{t_{k-1}} > \alpha \sqrt{h}$,
so in the regular regime, where $\Ldonskermin_t$ is differentiable, a change of $i_k$ will only happen every $O(1/\sqrt{h})$ time steps,
and usually by only 1. 
Put differently, because of the monotonicity of $\Ldonskermin^h_{t_{k}}$ in $k$ and that of the iteration, the total number of iterations summed up over all time steps
is bounded by $\alpha/ \sqrt{h}$.
So the average number of iterations %over all timesteps
 is 1 + $O(\sqrt{h})$.

In the presence of a jump ($\alpha=1.5$), a larger number of iterations is needed at the time of the jump, but this number only grows mildly under mesh refinement, and
on average the number of iterations is still close to 1.

In either case, therefore, the computational cost of the iteration amounts to less than 10\% of the overall cost for reasonably fine time meshes.

Figure \ref{fig:loss} shows the numerical free boundary $(t_k,\Ldonskermin^h_{t_k}/\alpha)$, labeled `implicit',
and scaled by $1/\alpha$ to show how much mass was absorbed at the boundary. 
The solution is compared to an approximation computed with a simplified scheme where the iteration is stopped after the first iteration.
The latter can be considered an explicit treatment where the interaction term computed at each time point is used to compute
the density at the following point, and is hence labeled `explicit', in contrast to scheme \eqref{impl_rec} where $p^k$ and $\Ldonskermin^h_{t_k}$ are implicitly coupled.
This reveals that the explicit scheme smoothes out the jump and takes a significant time for the losses to `catch up'.
In contrast, the implicit scheme \eqref{impl_rec} reproduces a sharp jump, i.e., for sufficiently small $h$ the increment $\Ldonskermin^h_{t_{k+1}}-\Ldonskermin^h_{t_k}$ is small for all but one $k$, while this largest increase does not go to 0 as $h$ diminishes, but converges to the true jump size.\footnote{We thank Andreas S\o{}jmark for a discussion on the implicit treatment of jumps.}

\begin{figure}[t!]
    \centering
    \begin{subfigure}[t]{0.49\textwidth}
        %\centering
            \hspace{-0.2cm}
        \includegraphics[height=0.7\textwidth, width=1.05\textwidth]{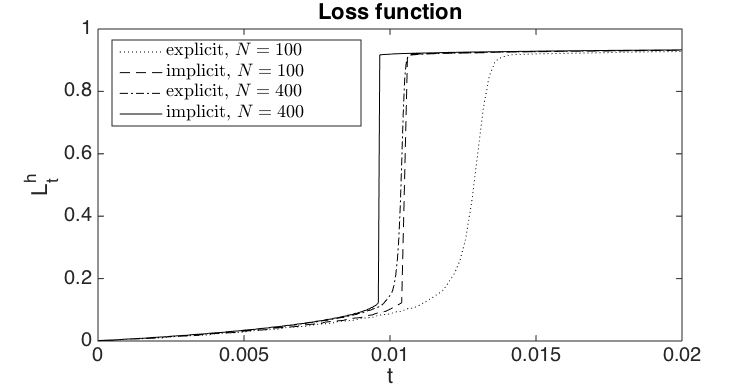}
%        \vspace{-3cm}
        \caption{Jump for $\alpha=1.5$, explicit vs. implicit.}
        \label{fig:loss}
    \end{subfigure}%
    \hfill
    \begin{subfigure}[t]{0.49\textwidth}
        %\centering
        \hspace{-0.5cm}
        \includegraphics[height=0.7\textwidth, width=1.05\textwidth]{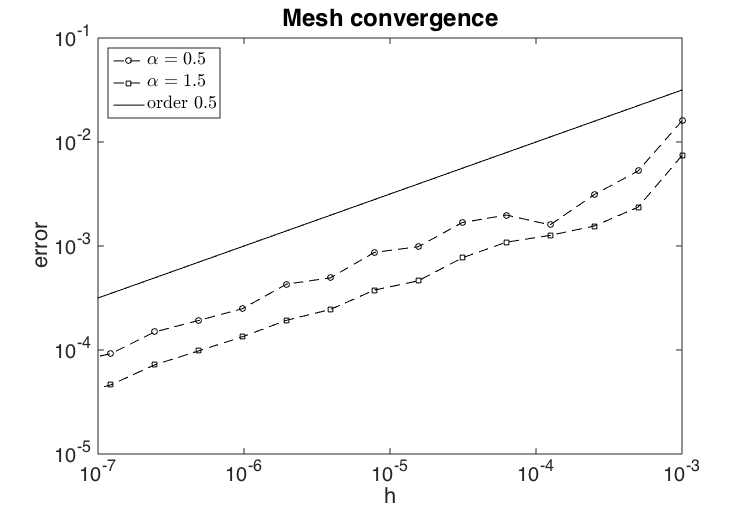}
        
        %mesh_alpha_05_15_N4e5}
%         \vspace{-3cm}
         \caption{Implicit Donsker, $\alpha=0.5$ and $\alpha=1.5$.}      
       \label{fig:alpha}
    \end{subfigure}
  %  \vspace{-1.5cm}
     \caption{Convergence of $L^h = \underline{\Lambda}^h/\alpha$ in the Donsker scheme.}
     %\label{fig:first}
    \end{figure}

Let us now turn to the convergence of $L^h = \Ldonskermin^h/\alpha$  as the step size $h$ goes to 0 in the Donsker scheme.
Figure \ref{fig:alpha} shows the error estimator $2 (L_T^h - L_T^{2 h})$ for decreasing $h$,
for $\alpha = 0.5$ and $\alpha = 1.5$, and otherwise the same parameters as earlier.\footnote{Assuming the error to be 
$L_T - L_T^{h} \approx c h^{1/2}$, we find more precisely $L_T^h - L_T^{2 h} \approx (\sqrt{2}-1) (L_T - L_T^{h})$.
} 
In both cases the order of convergence appears to be $0.5$, irrespective of the jump that occurs for $\alpha=1.5$. 

We now analyse the same example in the jump regime ($\alpha=1.5$), 
for the implicit and explicit Euler-type time-stepping scheme (i.e.\ without Donsker approximation), whose precise difference is explained in Remark \ref{rem:implicit_explicit}.
In the first two experiments, we fix a seed to generate $n=100\, 000$ sample paths 
for the particle method detailed in Section \ref{sec:particle}, and vary the number $N$ of  time points. The relatively small number of particles is chosen to keep the computational time similar to the Donsker scheme.

In Figure \ref{fig:loss_timestepping} we observe the same phenomenon as in Figure \ref{fig:loss} for the Donsker scheme 
when comparing the resolution of the jump between explicit and implicit schemes.
%for the implicit and explicit scheme.
 Indeed, the explicit scheme again smoothes out the jump and takes more time to converge, 
in a way already seen in \cite{kaushansky2020convergence} (see in particular Figure 3 there). More precisely, the implicit scheme with $N=100$ behaves again similarly to the explicit scheme with $N=400$ (compare Figure \ref{fig:loss}).
%Figure~\ref{fig:jumptimeconv} quantifies this further by showing the error estimator $2|t^{N}_*-t_*^{2N}|$, where $t_*^N$ denotes the jump time when using $N$ time points. At least for small $N$ the superiority of the implicit scheme is clearly visible since $2|t^{N}_*-t_*^{2N}|$ is smaller than for the explicit one.

\begin{figure}[t!]
    \centering
    \begin{subfigure}[t]{0.49\textwidth}
        %\centering
            \hspace{-0.2cm}
        \includegraphics[height=0.7\textwidth, width=1.05\textwidth]{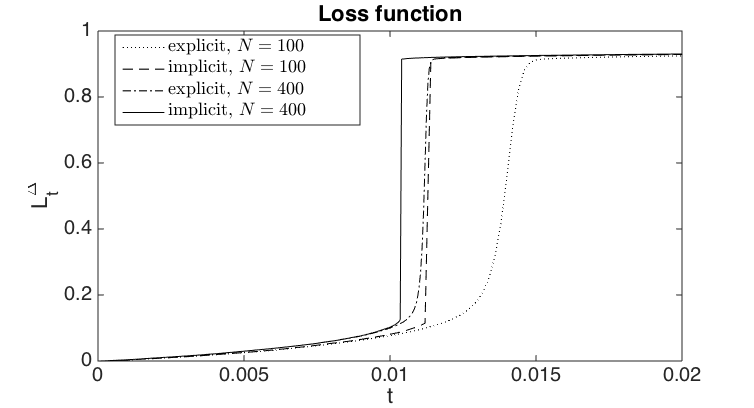}
%        \vspace{-3cm}
        \caption{Jump for $\alpha=1.5$, explicit vs. implicit.}
        \label{fig:loss_timestepping}
    \end{subfigure}%
    \hfill
%    \begin{subfigure}[t]{0.49\textwidth}
%        %\centering
%        \hspace{-0.5cm}
%        \includegraphics[height=0.7\textwidth, width=1.05\textwidth]{}
%        %mesh_alpha_05_15_N4e5}
%%         \vspace{-3cm}
%         \caption{Jump time convergence,  explicit vs. implicit.}      
%       \label{fig:jumptimeconv}
%    \end{subfigure}
%  %  \vspace{-1.5cm}
%    
%     %\label{fig:first}
%     \vspace{0.5cm}
       \begin{subfigure}[t]{0.49\textwidth}
        \centering
            \hspace{-0.5cm}
        \includegraphics[height=0.7\textwidth, width=1.05\textwidth]{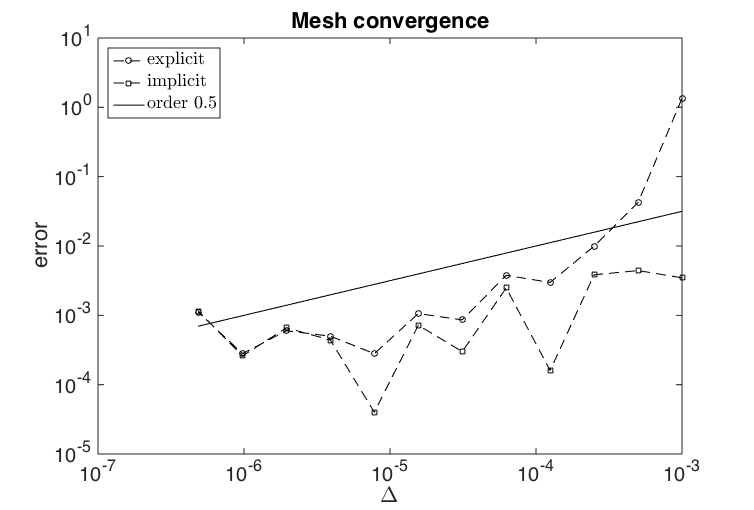}
 %       \vspace{-3cm}
        \caption{Mesh convergence, explicit vs. implicit.}
        \label{fig:mesh_timestepping}
   \end{subfigure}
    \caption{Convergence of $L^\Delta = \underline{\Lambda}^\Delta/\alpha$ in the time-stepping schemes for $\alpha=1.5$.}
    \end{figure}

Concerning the convergence of $L_T^\Delta = \Ldonskermin_T^\Delta/\alpha$ (for $\Delta=1/N)$, both the implicit and explicit time-stepping scheme show more irregular behaviour than the implicit Donsker approximation as illustrated in Figure \ref{fig:mesh_timestepping}. This is likely to be a consequence of the Monte Carlo error which is quite high due to the relatively small sample size.
Figure \ref{fig:jumps} quantifies this further by showing the error estimator $4 |t^{2\Delta}_*-t_*^{\Delta}|$ for the jump times\footnote{The factor 4 accounts for extrapolation of the timestepping error and additional Monte Carlo error.},
where $t_*^\Delta$ denotes the jump time for
mesh size $\Delta$, identified by $t_*^\Delta = \Delta \; \text{argmax}_{0<k\le N}\{\Ldonskermin^{\Delta}_{t_{k}} - \Ldonskermin^{\Delta}_{t_{k-1}}\}$.
Also shown is the error estimator $4 |J^{2\Delta}- J^{\Delta}|$ for the jump size, where
$J^{\Delta} = L^\Delta_{t*} -  L^\Delta_{t*-\Delta}$.
Here, we choose between $N=2$ and approximately 2000 timesteps, and $2000 N$ samples, to reduce the Monte Carlo error together with the time stepping error.

\begin{figure}[t!]
    \centering
        \includegraphics[height=0.4\textwidth, width=0.6\textwidth]{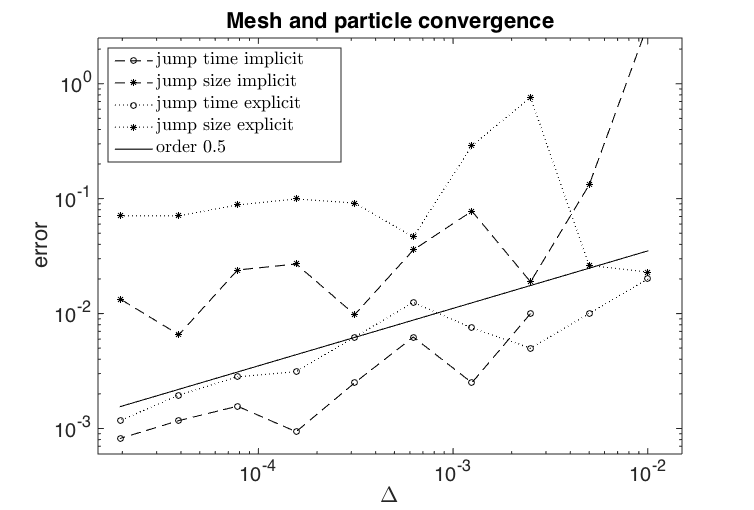}
     \caption{Convergence of jump time and jump size, explicit and implicit time stepping.}
      \label{fig:jumps}
    \end{figure}

The jump times appear to converge with order 1/2 for both the explicit and implicit scheme, where the implicit scheme is slightly more accurate by a constant factor.\footnote{The missing data points in the implicit case are explained by 0 change of the jump time for coarse time steps and resulting undefined values in the log-log plot.} This is consistent with the earlier observation in Figure \ref{fig:loss_timestepping}.
The simple estimate $J^{\Delta}$ of the jump size does not converge to the true jump size of around 0.78 for the explicit scheme, but fluctuates around 0.27. This is due to the fact that we only consider changes over a single time step.
In contrast, the jump size in the implicit scheme appears to converge with order 1/2, albeit with relatively high variance.

%At least for small $N$ the superiority of the implicit scheme is clearly visible since $2|t^{N}_*-t_*^{2N}|$ is smaller than for the explicit one.

%, with 
%the more regular convergence happening in the case $\alpha = 1.5$, where a jump happens.
%In spite of the discretization of the driving Brownian motion, convergence is faster than that of the explicit time-stepping scheme in \cite{kaushansky2018simulation, kaushansky2020convergence}, where no positive order of convergence was observed in the jump case. This appears to be a result of the implicit treatment of the jump
%as explained above.

One key advantage of the Donsker scheme is the avoidance of Monte Carlo sampling, which explains its
outperformance over the time-stepping algorithm in terms of computational complexity.
We shall therefore focus in the following examples solely on the implicit Donsker approximation, for which 
we investigate two further cases of intermediate regularity, inspired by \cite{kaushansky2020convergence}.

First, we consider the jump regime with $\alpha=1.5$ and  vary $k$ in the $\Gamma(k,1/3)$ initial distribution of $X_{0-}$ to $k=3/2$ and $5/4$, such that the density is only H\"older 1/2 and 1/4, respectively. 
As seen from Figure \ref{fig:k}, the empirical convergence order is still 0.5 in all cases, even though for small $N$ the lower regularity of the initial density is noticeable.

\begin{figure}[t!]
    \centering
    \begin{subfigure}[t]{0.49\textwidth}
        %\centering
            \hspace{-0.2cm}
        \includegraphics[height=0.7\textwidth, width=1.05\textwidth]{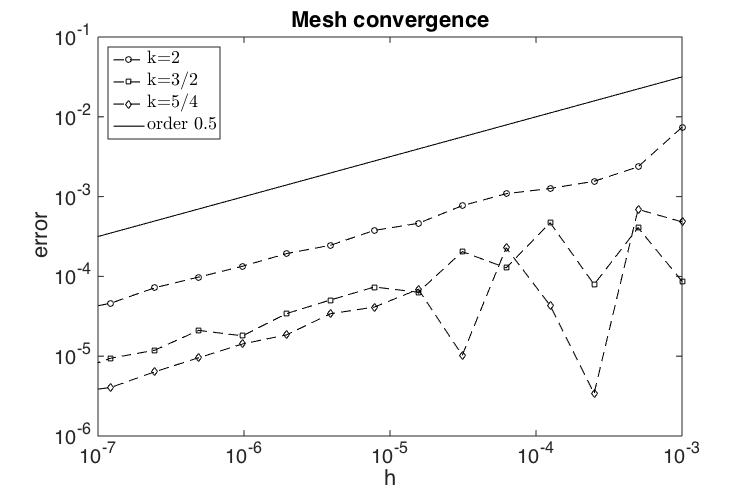}
        %mesh_init_hoelder}
%        \vspace{-3cm}
        \caption{Different $\Gamma(k,1/3)$ initial values, alpha=1.5.}
        \label{fig:k}
    \end{subfigure}%
    \hfill
    \begin{subfigure}[t]{0.49\textwidth}
        %\centering
        \hspace{-0.7cm}
        \includegraphics[height=0.7\textwidth, width=1.05\textwidth]{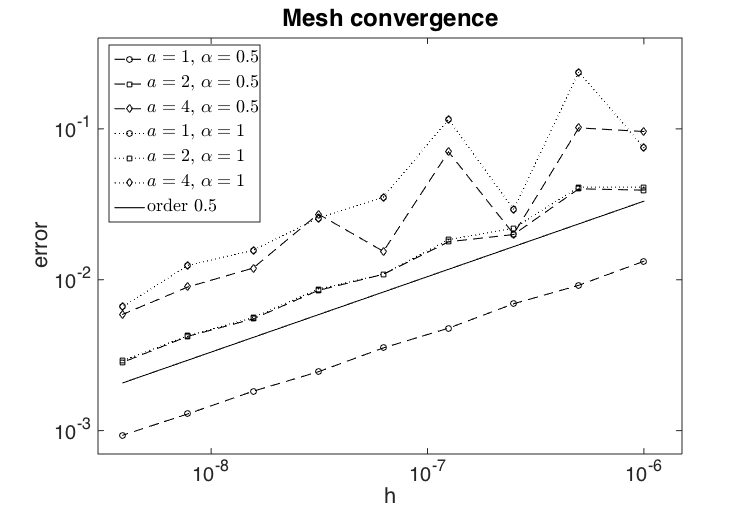}
%         \vspace{-3cm}
         \caption{Initial densities as in \eqref{dens_a_alph}.}      
       \label{fig:a}
    \end{subfigure}
  %  \vspace{-1.5cm}
     \caption{Mesh convergence, implicit Donsker.}
    \end{figure}

Second, we consider the initial density
\begin{equation}
\label{dens_a_alph}
	V_{0-}(x) = \left\{
	\begin{tabular}{ll}
		$\frac{1}{\alpha} - c x^{a}$, & \quad $0 \le x \le A$, \\
		$0$, & \quad $x > A$,
	\end{tabular} \right.
\end{equation}
for $\alpha>0$ and $a>0$, which we vary in the tests, and where $A>0$ is determined by $\int_0^\infty V_{0-}(x)\,\mathrm{d}x=1$ for given $c>0$, the latter being sufficiently small. Moreover, we let $T=10^{-4}$ be small enough to precede a possible discontinuity.
Here, the convergence order of the explicit time-stepping scheme in \cite{kaushansky2020convergence} is $1/(2(a+1))$ (see Theorem 1.5 and Table 1 there).
Remarkably, the asymptotic order of our scheme appears to be 0.5 irrespective of $a$, see Figure \ref{fig:a}.

%
%\begin{figure}[t!]
%    \centering
%            \hspace{-1cm}
%        \includegraphics[height=0.45\textwidth, width=0.65\textwidth]{Plots/mesh_init_oneover}
%%        \vspace{-3cm}
%        \caption{Timestep convergence for $a=...$}
%        \label{fig:nonreg}
%\end{figure}
%
%

%\newpage
\bibliographystyle{plain}
\bibliography{approximation_schemes_arxiv}
\end{document}